\newcommand{\bel}[1]{\begin{equation}\label{#1}}
\newcommand{\be}{\begin{equation}}
\newcommand{\ba}{\begin{eqnarray}}
\newcommand{\ea}{\end{eqnarray}}
\newcommand{\qe}{\end{equation}}
\newcommand{\R}{{\mathbb R}}
\newcommand{\N}{{\mathbb N}}
\newcommand{\wt}{\widetilde}
\newcommand{\dist}{\rho}
\newcommand{\eg}{\begin{example}}
\newcommand{\egd}{\end{example}}
\newcommand{\tm}{\begin{thm}}
\newcommand{\tmd}{\end{thm}}
\newcommand{\co}{\begin{coro}}
\newcommand{\cod}{\end{coro}}
\newcommand{\enu}{\begin{enumerate}}
\newcommand{\enud}{\end{enumerate}}
\newcommand{\rmk}{\begin{rem}}
\newcommand{\rmkd}{\end{rem}}
\newtheorem{thm}{Theorem}[section]
\newtheorem{prop}{Proposition}[section]
\newtheorem{example}{Example}[section]
\newtheorem{coro}{Corollary}[section]
\newtheorem{lemma}{Lemma}[section]
\theoremstyle{definition}
\newtheorem{defi}{Definition}[section]
\theoremstyle{definition}
\newtheorem{assump}{Assumption}[section]
\theoremstyle{remark}
\newtheorem{rem}{Remark}[section]
\theoremstyle{remark}
\begin{document}

\title[Graphs with positive spectrum]{Graphs with positive spectrum}

\author{Bobo Hua}
\address{Bobo Hua: School of Mathematical Sciences, LMNS,
Fudan University, Shanghai 200433, China; Shanghai Center for
Mathematical Sciences, Fudan University, Shanghai 200433,
China.}
\email{bobohua@fudan.edu.cn}

\author{Zhiqin Lu}
\address{Zhiqin Lu: Department of Mathematics,
University of California,
410D Rowland Hall,
Irvine, CA 92697-3875,
USA.}
\email{zlu@uci.edu}
\begin{abstract}
In this paper, we prove sharp $\ell^2$ decay estimates of nonnegative generalized subharmonic functions on graphs with positive Laplacian spectrum, which extends the result by Li and Wang \cite{LiWang01} on Riemannian manifolds. \end{abstract}

\maketitle





\section{Introducton}


Let $M$ be a complete, noncompact Riemannian manifold without boundary. For any $R>0,$ we write $B_R(x)$ for the ball of radius $R$ centered at $x,$ and $\mathrm{vol}(\,\cdot\,)$ for the Riemannian volume.
We denote by 
$\mu_1(M)$ the bottom of the spectrum of the Laplace-Beltrami operator on $M,$ and by $\mu_e(M)$ the bottom of its essential spectrum.

In 1975, Cheng and Yau \cite{ChengYau75} proved that if $M$ has polynomial volume growth, then $\mu_1(M)=0.$  
Generalizing Cheng and Yau's result, Brooks \cite{Brooks1981,Brooks1984} proved that
\begin{equation}\label{eq:brooks}\mu_e(M)\leq \frac{1}{4}\tau(M)^2,\end{equation}
where the volume entropy of $M$ is defined as
 $$\tau(M):=\left\{\begin{array}{ll}{\displaystyle \limsup_{R\to\infty}}\,\frac{1}{R}\log \mathrm{vol}(B_R(p)),& \mathrm{if}\ \mathrm{vol}(M)=\infty,\\[1ex]
 {\displaystyle \limsup_{R\to\infty}}\,(-\frac{1}{R}\log \mathrm{vol}(M\setminus B_R(p))),& \mathrm{if}\ \mathrm{vol}(M)<\infty.\\ \end{array}\right.$$

For any compact $\Omega\subset M,$ any non-compact connected component $\Pi$ of $M\setminus \Omega$ is called an end of $M$ with respect to  $\Omega.$ We denote by $\mu_1(\Pi)$ the bottom of the spectrum of the Laplace-Beltrami operator on $\Pi$ with Dirichlet boundary condition. Fix $p\in M,$ we denote by $r(x)=d(x,p)$ the Riemannian distance function to the point $p.$ We write $\Pi_R:=\Pi\cap B_R(p)$ for $R>0$ and $$\Pi_{R_1}^{R_2}:=\Pi\cap \{x\in M: R_1\leq r(x)\leq R_2\},\quad \mathrm{for}\ 0<R_1<R_2.$$
We say that $f$ is a \emph{generalized subharmonic function} on $M$ if $\Delta f\geq  -\mu f$ for some $\mu\in \R.$

In~\cite[Ch.22]{Li12},
Li and Wang \cite{LiWang01} proved the following theorem on the $L^2$-decay estimates of generalized subharmonic functions,  which improved the above results of Brooks.

 \begin{thm}[Li-Wang] Let $M$ be a complete Riemannian manifold. {Let $p\in M$ be fixed point.} Let $R_0>0$ be a real number. Suppose $\Pi$ is an end of $M$ with respect to $B_{R_0}(p)$ such that $\mu_1(\Pi)>\mu$ for some constant $\mu> 0$. Let $f$ be a nonnegative generalized subharmonic function defined on $\Pi,$ 
 \[
 \Delta f\geq -\mu f.
 \]
 If $f$ satisfies the growth condition
 \begin{equation}\label{eq:con1}
 \int_{\Pi_R} f^2 e^{-2a r}=o(R),\quad R\to\infty
 \end{equation}
 with
 \[
 a=\sqrt{\mu_1(\Pi)-\mu},
 \]
 then it  satisfy the decay estimate
 \[
 \int_{\Pi_\rho^{\rho+1}} f^2
 \leq \frac{2a+1}{a^2} e^{-2a\rho}
  \int_{\Pi^{R_0+1}_{R_0}} e^{2a r}f^2
  \]
 for all $\rho\geq 2(R_0+1)$.  \end{thm}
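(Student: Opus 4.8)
The plan is to run the standard Agmon-type energy argument: combine the \emph{weak} form of the subharmonicity $\Delta f\ge-\mu f$ with the variational characterization of $\mu_1(\Pi)$, and then feed in a carefully shaped radial cutoff. First I would fix a Lipschitz function $g=g(r)$ of the distance $r$ that vanishes on $\partial\Pi=\{r=R_0\}$ and has compact support, so that $\phi:=g(r)$ makes $\phi f$ an admissible Dirichlet test function with $\phi f\in W^{1,2}_0(\Pi)$. Testing $\Delta f\ge-\mu f$ against the nonnegative function $\phi^2 f$ (here the hypothesis $f\ge0$ is used) gives
\[
\int_\Pi \phi^2|\nabla f|^2 + 2\int_\Pi \phi f\,\nabla\phi\cdot\nabla f \le \mu\int_\Pi \phi^2 f^2,
\]
while the definition of $\mu_1(\Pi)$ applied to $\phi f$ gives
\[
\mu_1(\Pi)\int_\Pi \phi^2 f^2 \le \int_\Pi \phi^2|\nabla f|^2 + 2\int_\Pi \phi f\,\nabla\phi\cdot\nabla f + \int_\Pi f^2|\nabla\phi|^2 .
\]
Subtracting, the gradient cross terms cancel and, using $|\nabla r|=1$ and $a^2=\mu_1(\Pi)-\mu$, I obtain the single clean inequality
\[
a^2\int_\Pi g(r)^2 f^2 \le \int_\Pi g'(r)^2 f^2 ,
\]
valid for every admissible radial $g$. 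This is the engine of the proof; the rest is a matter of choosing $g$.

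Next I would choose $g$ to interpolate between the two annuli in the statement. Concretely I would take $g(r)=(r-R_0)e^{ar}$ on $[R_0,R_0+1]$, then $g(r)=e^{ar}$ on $[R_0+1,\rho]$, the plateau $g(r)=e^{a\rho}$ on $[\rho,\rho+1]$, the decaying exponential $g(r)=e^{a(2\rho+1-r)}$ on $[\rho+1,R]$, and a final linear descent to $0$ on $[R,R+1]$, where $R$ is a large radius to be sent to infinity. The key algebraic fact is that on any interval where $g$ is a pure exponential $e^{\pm ar}$ one has $g'^2=a^2 g^2$, so the two pure-exponential regions contribute nothing to the difference of the two sides of the engine inequality. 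The plateau, where $g'=0$ and $g^2=e^{2a\rho}$, isolates exactly the quantity to be estimated, producing the term $a^2 e^{2a\rho}\int_{\Pi_\rho^{\rho+1}}f^2$ on the left. The engine inequality therefore collapses to
\[
a^2 e^{2a\rho}\int_{\Pi_\rho^{\rho+1}} f^2 \le \int_{\Pi_{R_0}^{R_0+1}}\big(g'^2-a^2g^2\big)f^2 + \int_{\Pi_R^{R+1}} g'^2 f^2 ,
\]
the two surviving contributions coming from the inner up-ramp and the outer cutoff.

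The inner up-ramp is where the stated constant is born. On $[R_0,R_0+1]$ a direct computation with $g=(r-R_0)e^{ar}$ gives $g'^2-a^2g^2=\big(1+2a(r-R_0)\big)e^{2ar}\le(2a+1)e^{2ar}$, so the first term on the right is at most $(2a+1)\int_{\Pi_{R_0}^{R_0+1}} e^{2ar} f^2$; the linear profile $(r-R_0)$ is precisely what yields the constant $2a+1$. The growth hypothesis \eqref{eq:con1} disposes of the outer term: writing $G(R)=\int_{\Pi_R} e^{-2ar}f^2=o(R)$, the outer cutoff is controlled by $e^{2a(2\rho+2)}\big(G(R+1)-G(R)\big)$, and since $G$ is nondecreasing with $G(R)=o(R)$ an averaging argument forces $\liminf_{R\to\infty}\big(G(R+1)-G(R)\big)=0$. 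Selecting a sequence $R_k\to\infty$ realizing this liminf makes the outer term vanish in the limit and yields the asserted estimate with constant $\tfrac{2a+1}{a^2}$; the hypothesis $\rho\ge 2(R_0+1)$ merely guarantees that the inner and outer annuli are disjoint so that the profile is well defined.

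The main obstacle I anticipate is the tail analysis rather than the core inequality: because $f$ is only assumed to decay in the weighted, averaged sense \eqref{eq:con1}, one cannot simply let the cutoff converge to a genuine exponential, whose $L^2$ mass may well diverge, and the whole force of the argument rests on extracting a good sequence $R_k$ along which the boundary layer at radius $R$ becomes negligible. A secondary technical point to check carefully is that $g(r)f$ genuinely lies in the form domain of the Dirichlet Laplacian on $\Pi$ and that the pairing against $\phi^2 f$ is justified purely at the level of the weak subsolution inequality, so that no classical regularity of $f$ is required.
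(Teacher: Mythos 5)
Your proposal is correct and is essentially the paper's own proof: your ``engine'' inequality $a^2\int_\Pi g(r)^2f^2 \le \int_\Pi g'(r)^2f^2$ is exactly the paper's base inequality $a^2\int_\Pi \phi^2f^2 \le \int_\Pi |\nabla\phi|^2f^2$, and your piecewise profile $g$ coincides with the paper's test function $\phi\, e^h$ (linear ramps on $[R_0,R_0+1]$ and $[R,R+1]$, exponential $e^{ar}$ up to $\rho$, plateau on $[\rho,\rho+1]$, decaying exponential beyond). The only difference is bookkeeping: you compute $g'^2-a^2g^2$ piece by piece, exploiting exact cancellation on the pure exponential regions, whereas the paper expands $|\nabla(\phi e^h)|^2$ into three terms and bounds the mixed term over the two ramp annuli --- both routes give the same constant $2a+1$ and the same averaging/subsequence treatment of the outer tail.
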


\begin{rem} 
\begin{enumerate}[(i)]\item One can't replace the assumption of $o(R)$ by $O(R)$, which can be seen by considering a non-constant bounded harmonic function $f$ on the hyperbolic space, see e.g. \cite[p.277]{Li12}.
\item By this theorem, one can derive a stronger result than Brooks' result \eqref{eq:brooks} using a variant of $\tau(M)$ defined by replacing the limsup in the definition of $\tau(M)$ by the liminf, see e.g. \cite{HKW13JLMS} or Corollary~\ref{coro:app3} below.
\end{enumerate} 
\end{rem}

The purpose of this paper is to extend Li-Wang's result to graphs with positive spectrum. 
In order to reach the goal, we give a new proof of the Li-Wang estimate by introducing new test functions, which avoids some technical estimates in \cite{LiWang01}. This is {then} robust enough to be extended to the setting of graphs.

 We recall the setting of weighted graphs. Let $(V,E)$ be a locally finite, simple, undirected graph with the set of vertices $V$ and the set of edges $E.$ Two vertices $x,y$ are called neighbours, denoted by $x\sim y$, if there is an edge connecting $x$ and $y,$ i.e. $\{x,y\}\in E.$ 
 A graph is called connected if for any $x,y\in V,$ there are vertices $z_i,$ $0\leq i\leq n$, such that $x = z_0 \sim. . . \sim z_n = y.$ We denote by $$d(x, y) =
\min\{n | x = z_0 \sim. . . \sim z_n = y\}$$ the combinatorial distance between $x$ and $y$, that is, the minimal number of edges in a path among all paths connecting $x$ and $y.$ We always assume that the graph $(V,E)$ is connected.  
Let $$w: E\to \R_+,\ \{x,y\}\mapsto w_{xy}=w_{yx}$$ be an edge weight function, and $$m:V\to \R_+,\ x\mapsto m_x$$ be a vertex weight function. For convenience, we extend $w$ to $V\times V$ by assigning $w_{xy}=0$ to the pair $(x,y)$ with $x\not\sim y.$ We denote by $$|\Omega|:=\sum_{x\in \Omega}m(\Omega),\quad \Omega\subset V$$ the $m$-measure of $\Omega,$ and by $\ell^p(V,m),$ $p\in [1,\infty],$ the space of $\ell^p$-summable functions on $V$ with respect to $m.$ For any function $f:V\to\R,$ we write
\begin{equation}\label{abc}
\sum_{\Omega} f m=\sum_{x\in \Omega} f(x) m_x,
\end{equation}
 whenever it makes sense. We call the quadruple $G=(V,E,m,w)$ a weighted graph. For a weighted graph $G,$ the Laplace operator $\Delta$ is defined as, for any function $f:V\to \R,$
$$\Delta f (x):= \sum_{y\in V}\frac{w_{xy}}{m_x}\left(f(y)-f(x)\right), \quad\forall x\in V.$$
A function $f$ on $V$ is called harmonic if $\Delta f=0.$


For the analysis on general weighted graphs, Frank, Lenz and Wingert \cite{FLW12} introduced the so-called intrinsic metrics, see e.g. \cite{GHM,KellerLenz12,HKMW13,HKW13JLMS,BHKadv,Huang14pa,Folz14,HShiozawa,HuaKeller14,Bauer-Keller-Wojciechowski15,HuaLin17,BauerHY17,GLLY18} for recent developments. A \emph{(pseudo) metric} is a map $\rho:V\times V\to [0,\infty),$ which is symmetric, satisfies the triangle inequality and $\rho(x,x)=0$ for all $x\in V.$  For any $R\in \R_+:=(0,\infty),$ we write $B_R(x):=\{y\in V: \dist(y,x)\leq R\}$ for the ball of radius $R$ centered at $x.$ 
A metric $\dist$ is called an \emph{intrinsic} metric on $G$, if for any $x\in V,$
$$\sum_{y\in V}w_{xy}\dist^2(x,y)\leq m_x.$$ We denote by
$$s:=\sup_{x\sim y}\dist(x,y)$$ the jump size of the metric $\dist.$ In this paper, we only consider intrinsic metrics satisfying the following assumption.
\begin{assump}\label{ass:1} The metric $\dist$ is an intrinsic metric on $G$ such that
\begin{enumerate}[(i)]
\item for any $ x\in V, R\in \R_+,$ $B_R(x)$ is a finite set, and
\item $\dist$ has finite jump size, i.e. $s<\infty.$
\end{enumerate}
\end{assump}

Note that for the combinatorial distance $d,$ the jump size $s=1.$
The combinatorial distance $d$ is an intrinsic metric on $G$ if and only if
$$\sum_{y\in V}w_{xy}\leq m_x,\quad \forall x\in V.$$ In the following, for $\rho=d,$ we always set $m_x:=\sum_{y\in V}w_{xy},$ and the corresponding Laplacian is called normalized Laplacian in the literature.

For any $\Omega\subset V,$ 
we denote by
$\mu_1(\Omega)$ the bottom of the spectrum of the Laplacian with Dirichlet boundary condition on $\Omega,$ and by $\mu_1(G):=\mu_1(V)$ the bottom of the spectrum of the Laplacian on $G.$ We write $C_0(\Omega)$ for the set of functions of finite support, whose support is contained in $\Omega.$
By the Rayleigh quotient characterization, $$\mu_1(\Omega)=\inf_{f\in C_0(\Omega)\setminus\{0\}}\frac{\frac12\sum_{x,y\in V}w_{xy}|f(x)-f(y)|^2}{\sum_{x\in V}f^2(x)m_x}.$$ Furthermore, we denote by $\mu_e(G)$ the bottom of the essential spectrum of the Laplacian on $G.$

Fix a vertex $x_0.$  We denote by $r(x)=\dist(x,x_0)$ the distance function to $x_0,$ and by $$A_{R_1}^{R_2}:=\{x\in V: R_1\leq r(x)\leq R_2\}$$ the annulus of inner radius $R_1$ and outer radius $R_2$ for $0<R_1<R_2.$ For any subset $K\subset V,$ we write $\partial K=\{y\in V\setminus K: \exists x\in K, y\sim x\}$ for the vertex boundary of $K,$ and $\overline{K}=K\cup \partial K.$ For any finite $\Omega\subset V,$ we consider $V\setminus \Omega$ as the induced subgraph on $V\setminus \Omega.$ Any infinite connected component $\Pi$ of $V\setminus \Omega$ is called an end of $G$ with respect to $\Omega.$ In general, when we say that $\Pi$ is an end we mean that $\Pi$ is an end of $G$ with respect to some finite $\Omega.$ We write $\Pi_R=\Pi\cap B_R(x_0)$ for $R>0$ and $$\Pi_{R_1}^{R_2}:=\Pi\cap A_{R_1}^{R_2},\quad \mathrm{for}\ 0<R_1<R_2.$$

In this paper, we always use the following definition for the function $a(\cdot).$
\begin{defi}\label{def:d1}Let $G=(V,E,m,w)$ be an infinite weighted graph, and $\dist$ be an intrinsic metric satisfying Assumption~\ref{ass:1} with jump size $s$. Given any $\mu\in \R,$ we define

\[a_\mu(t):=\left\{\begin{array}{ll}\frac{1}{s}\log (1+s\sqrt{2(t-\mu)}),\ \forall\ t\in(\mu,+\infty),& \mathrm{if}\ \dist\neq d,\\
\log\frac{1-\mu}{1-t}+\log\left(1+\sqrt{1-\left(\frac{1-t}{1-\mu}\right)^2}\right),\ \forall\ t\in(\mu,1),& \mathrm{if}\ \dist=d.\\ \end{array}\right.\]

\end{defi}
\begin{rem} For our purposes, we distinguish two cases, whether $\dist$ is the combinatorial distance $d$ or not. Note that $a=a_\mu(t)$ satisfies $$\frac{(e^{as}-1)^2}{2s^2}=t-\mu,\quad \mathrm{for}\  \dist\neq d,\ \quad\mathrm{and}$$ 
 $$\frac{(e^a-1)^2}{1+e^{2a}}=\frac{t-\mu}{1-\mu}, \quad \mathrm{for}\ \dist=d.$$ The latter equation for $\mu=0$ first appears in Fujiwara's paper \cite{Fujiwaragrowth}.
\end{rem}

In this paper, we prove a discrete analog of Li and Wang's result.
\begin{thm}\label{thm:main1} Let $G=(V,E,m,w)$ be an infinite weighted graph, and $\dist$ be an intrinsic metric  satisfying Assumption~\ref{ass:1}. Let $\Pi$ be an end with $\mu_1:=\mu_1(\Pi).$ Let $f$ be a nonnegative function satisfying $\Delta f\geq-\mu f$ for some $ \mu< \mu_1.$ Let $a=a_\mu(\mu_1),$ see Definition~\ref{def:d1}. If there exist $R_i\to \infty,$ $i\to \infty,$ such that
\begin{equation}\label{eq:decay1}\sum_{\Pi_{R_i}^{R_i+3s}}f^2e^{-2ar}m\to 0,\quad i\to \infty,\end{equation}
then for $R_0\geq \max_{x\in \partial \Pi}r(x)$ and $R\geq R_0+3s,$
$$\sum_{\Pi_{R}^{R+3s}} f^2 m\leq Ce^{-2aR}\sum_{\Pi_{R_0}^{R_0+3s}}f^2 e^{2ar}m,$$ where $C=\frac{7e^{10as}}{s^2(\mu_1-\mu)}.$
\end{thm}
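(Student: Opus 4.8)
The plan is to mimic the continuous Li--Wang argument through an integration-by-parts (discrete Green's formula) applied to a well-chosen test function, but to work with the exponential weight $e^{ar}$ whose exponent $a=a_\mu(\mu_1)$ is tuned so that the jump-size corrections built into Definition~\ref{def:d1} make the weighted Laplacian inequality tight. Concretely, I would set $\phi=\chi f e^{ar}$ for a cutoff $\chi$ supported on an annulus $\Pi_{R_1}^{R_2}$ and test the inequality $\Delta f\geq-\mu f$ against $f e^{2ar}\chi^2$ (or a truncated variant), using the graph product rule $\sum w_{xy}(f(y)-f(x))g(x)m_x^{-1}$ and the Rayleigh characterization $\mu_1\sum g^2 m\leq\tfrac12\sum w_{xy}|g(x)-g(y)|^2$ with $g=\chi f e^{ar}$. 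The spectral bound supplies a term $\mu_1\sum\chi^2 f^2 e^{2ar}m$ on one side, while the subharmonicity supplies $-\mu\sum\chi^2 f^2 e^{2ar}m$; the difference $(\mu_1-\mu)$ is exactly what produces the factor $1/(\mu_1-\mu)$ in the constant $C$.

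First I would record the pointwise ``carr\'e du champ'' identity expanding $\tfrac12\sum_y w_{xy}|(\chi f e^{ar})(x)-(\chi f e^{ar})(y)|^2$ and separating it into a ``gradient of $f$'' piece, a ``gradient of the weight $e^{ar}$'' piece, and cross terms; this is the discrete analogue of $|\nabla(\chi f e^{ar})|^2=\dots$. The key algebraic input is that for neighbours $x\sim y$ with $|r(x)-r(y)|\leq s$, one has $|e^{ar(x)}-e^{ar(y)}|\leq(e^{as}-1)e^{a\min(r(x),r(y))}$, and combining this with the intrinsic-metric inequality $\sum_y w_{xy}\dist^2(x,y)\leq m_x$ yields $\sum_y w_{xy}|e^{ar(x)}-e^{ar(y)}|^2\leq \frac{(e^{as}-1)^2}{s^2}e^{2ar(x)}m_x$ (up to harmless shifts by $s$), and by the defining relation in the Remark this weight-gradient term equals $2(\mu_1-\mu)e^{2ar}m_x$ in the $\dist\neq d$ case (with the corresponding identity in the $\dist=d$ case). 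This is precisely the cancellation that drives the estimate: the weight is chosen so its ``energy'' matches the spectral gap.

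Next I would choose $\chi$ to be piecewise linear in $r$, equal to $1$ on $\Pi_{R_0+3s}^{R}$ and tapering to $0$ on the two transition annuli of width $3s$ near $r=R_0$ and near $r=R+3s$ (the $3s$ width is forced by the jump size, since a neighbour can move distance up to $s$ and cutoffs must be compared across such jumps, which is why the theorem is stated on $3s$-thick shells). After Cauchy--Schwarz to absorb the cross terms, the $\chi'$-type error terms localize onto these two transition regions, producing an inner-boundary contribution $\sum_{\Pi_{R_0}^{R_0+3s}}f^2 e^{2ar}m$ and an outer contribution $\sum_{\Pi_{R}^{R+3s}}f^2 e^{2ar}m$. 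Multiplying the left-hand ``good'' term $\sum_{\Pi_R^{R+3s}}f^2 e^{2ar}m$ through by $e^{-2aR}$ recovers $\sum_{\Pi_R^{R+3s}}f^2 m$ on the left, and the explicit numerical constant $C=7e^{10as}/(s^2(\mu_1-\mu))$ emerges from bounding the jump-size overshoots $e^{\pm as}$ (several factors, hence $e^{10as}$) and the Cauchy--Schwarz/cutoff constants (the factor $7$).

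The main obstacle I anticipate is controlling the outer boundary term: unlike the inner term, it sits at radius $R$ where $f$ is not assumed small, so it cannot simply be discarded. The resolution is the standard Li--Wang iteration: the growth hypothesis \eqref{eq:decay1} guarantees a sequence $R_i\to\infty$ along which the outer contribution $e^{-2aR_i}\sum_{\Pi_{R_i}^{R_i+3s}}f^2 e^{2ar}m=\sum_{\Pi_{R_i}^{R_i+3s}}f^2 e^{-2ar}m\cdot e^{2a(\cdots)}$ tends to zero (after matching the exponential factors across the $3s$-shell using finite jump size), so running the inequality on $[R,R_i]$ and letting $i\to\infty$ kills the outer term and leaves only the inner one. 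Making the bookkeeping of the exponential weights across the $3s$-thick shells precise enough to both close the iteration and produce the stated explicit constant, rather than just a qualitative decay, is the delicate part of the argument.
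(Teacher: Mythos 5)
Your proposal misses the idea that actually makes this proof work, and the gap is not repairable by bookkeeping. The root problem is your choice of weight: you use the \emph{monotone} weight $e^{ar}$ on the whole support of the cutoff, with $a$ tuned so that the weight's energy saturates the spectral gap ($\tfrac{(e^{as}-1)^2}{2s^2}=\mu_1-\mu$). But then the cancellation you describe as ``exactly what produces the factor $1/(\mu_1-\mu)$'' in fact destroys the estimate: after using $\Delta f\geq-\mu f$, the left-hand side carries the coefficient $\mu_1-\mu$ on $\sum\chi^2f^2e^{2ar}m$, while the term coming from $\Gamma(\chi e^{ar})$ is bounded above by $(\mu_1-\mu)\sum f^2 e^{2ar}m$ over the \emph{entire} region where $\chi\equiv1$ --- and this upper bound can be attained (e.g.\ when every vertex has a neighbour at distance exactly $s$ farther out). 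So the two contributions cancel identically on the region containing $\Pi_R^{R+3s}$, and the inequality degenerates to ``$0\leq$ cutoff errors'': no good term survives on the target annulus. The paper's proof avoids this precisely by taking a \emph{non-monotone} $h$, see \eqref{12-2}: $h(r)=ar$ up to $R-s$, constant on $(R-s,R+4s]$, and decreasing like $-ar$ beyond. Since $h$ is flat on an $s$-neighbourhood of $\Pi_R^{R+3s}$, one has $\nabla_{xy}e^h=0$ whenever $x$ or $y$ lies in that annulus, so the weight-energy term (the paper's term $IV$) vanishes there; the tuned choice of $a$ then cancels the gap only \emph{off} the annulus, and the surviving term $(\mu_1-\mu)\sum_{\Pi_R^{R+3s}}f^2\phi^2e^{2h}m$ is exactly what gets bounded.

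The same defect sinks your treatment of the outer boundary term. With your increasing weight, the outer error at radius $R_i$ is $\sum_{\Pi_{R_i}^{R_i+3s}}f^2e^{2ar}m$, and your claimed matching $e^{-2aR_i}\sum_{\Pi_{R_i}^{R_i+3s}}f^2e^{2ar}m=\sum_{\Pi_{R_i}^{R_i+3s}}f^2e^{-2ar}m\cdot e^{2a(\cdots)}$ hides the factor $e^{2a(2r-R_i)}\approx e^{2aR_i}\to\infty$; hypothesis \eqref{eq:decay1} carries the weight $e^{-2ar}$ and therefore cannot kill this term (take $f\sim e^{\alpha r}$ with $0<\alpha<a$: it satisfies \eqref{eq:decay1} under mild volume growth, yet your outer term blows up). In the paper, the decreasing tail of $h$ makes the weight on the outer annulus equal to $e^{4aR+6as}e^{-2ar}$, so the outer error is $Ce^{4aR+6as}\sum_{\Pi_L^{L+3s}}f^2e^{-2ar}m$ with $R$ fixed, and \eqref{eq:decay1} applies verbatim along $L=R_i$. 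Finally, even granting the correct weight, your sketch does not engage with the case $\dist=d$, which in the theorem carries a strictly better exponent: for $d$, $a_\mu(\mu_1)$ solves $\tfrac{(e^{a}-1)^2}{1+e^{2a}}=\tfrac{\mu_1-\mu}{1-\mu}$, which is larger than the $a$ produced by the generic relation $\tfrac{(e^{a}-1)^2}{2}=\mu_1-\mu$. Reaching it requires the paper's refined analysis of the terms $IV+V$ (keeping the negative term $V$, symmetrizing into $D_1+A_1+B_1$, and invoking $\Delta f\geq-\mu f$ a second time inside the term $A_2$, together with Proposition~\ref{prop:less1} to ensure $\mu_1<1$), not merely ``the corresponding identity'' in the $\dist=d$ case.
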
 

\begin{rem} \begin{enumerate}[(i)]
\item The condition in \eqref{eq:decay1} is weaker than the following condition,
\begin{equation}\label{eq:disa1}\sum_{\Pi_R}f^2 e^{-2ar}m=o(R),\ R\to \infty,\end{equation} which is a discrete analog of \eqref{eq:con1}, see Proposition~\ref{prop:weaker}.
\item For $\rho=d,$ by Proposition~\ref{prop:less1}, $\mu_1<1,$ which implies that $a_\mu(\mu_1)$ is well-defined.
\item For $\rho=d,$ the result is sharp for homogeneous trees, see Example~\ref{ex:tree1}.\end{enumerate}
\end{rem}
On Riemannian manifolds, for the proof of the case $f\equiv 1,$ Brooks \cite{Brooks1981} introduced a test function which behaves as $e^{a r(x)}$ for $r(x)\leq L,$  $L>0,$ and $e^{-a r(x)}$ for $r(x)\geq L.$ There are many generalizations of Brooks' result on graphs, see e.g.  \cite{DoKa88,OU1994,Fujiwaragrowth,HKW13JLMS}. In particular, Haeseler, Keller, and Wojciechowski \cite{HKW13JLMS} proved Brooks' result for general weighted graphs (with possibly unbounded Laplacians) admitting intrinsic metrics. For Riemannian manifolds, Li and Wang \cite{LiWang01} introduced suitable test functions to extend the results for generalized subharmonic function $f.$ It seems that Li and Wang's test functions don't work in the graph setting. In particular, they derived a useful positive term from a mixed term, see \cite[p.9]{LiWang01} or \cite[p.272]{Li12}. 
 In this paper, we introduce new test functions, see Section~2 or \eqref{def:ph} for the definition, to avoid the mixed term estimate, and are able to prove $\ell^2$ decay estimates for generalized subharmonic functions on weighted graphs admitting intrinsic metrics.



In the following, we discuss some applications of the main result. For any $\alpha\geq 0,$ we denote by $$g_\alpha(x,y)=\int_0^\infty e^{-\alpha t}p_t(x,y)dt$$ the $\alpha$-resolvent kernel, i.e. the kernel of the operator $(-\Delta+\alpha)^{-1},$ where $p_t(\cdot,\cdot)$ is the minimal heat kernel on $G$. In particular, for $\alpha=0,$ $g_0(\cdot,\cdot)$ is the minimal Green's function on $G,$ which is finite if and only if $G$ is non-parabolic (also known as transient). By Theorem~\ref{thm:main1}, we prove the following result.
\begin{thm}\label{thm:app1} Let $G=(V,E,m,w)$ be an infinite weighted graph with the bottom of the spectrum $\mu_1(G)>0,$ and $\rho$ be an intrinsic metric satisfying Assumption~\ref{ass:1}. For $\alpha\geq 0,$ let $a=a_{-\alpha}(\mu_1(G)).$ Then  
$$\sum_{A_{R}^{R+3s}}g_{\alpha}^2(x_0,\cdot)m\leq C e^{-2aR},\quad \forall R>0,$$ where the constant $C$ is independent of $R.$
\end{thm}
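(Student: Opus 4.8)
The plan is to apply Theorem~\ref{thm:main1} to the nonnegative function $f:=g_\alpha(x_0,\cdot)$, after excising the singularity at $x_0$. First I record the three properties of $f$ that the theorem requires. Since the minimal heat kernel is positive and $e^{-\alpha t}>0$, we have $f>0$. Because $\mu_1(G)>0$ and $\alpha\geq 0$, the self-adjoint Laplacian $-\Delta$ generating the minimal semigroup $p_t$ satisfies $-\Delta\geq \mu_1(G)>0$, so $(-\Delta+\alpha)^{-1}$ is bounded on $\ell^2(V,m)$ with norm at most $(\mu_1(G)+\alpha)^{-1}$; consequently
\[
f=(-\Delta+\alpha)^{-1}\Big(\tfrac{1}{m_{x_0}}\delta_{x_0}\Big)\in\ell^2(V,m),
\qquad
\sum_V f^2 m\leq\frac{1}{(\mu_1(G)+\alpha)^2\,m_{x_0}}<\infty .
\]
Moreover, reading the defining relation of the resolvent kernel pointwise gives $(-\Delta+\alpha)f=\tfrac{1}{m_{x_0}}\delta_{x_0}$, so that, setting $\mu:=-\alpha\leq 0$, we have $\Delta f=\alpha f=-\mu f$ on $V\setminus\{x_0\}$; in particular $\Delta f\geq-\mu f$ off $x_0$, and $\mu<\mu_1(G)$.

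Next I would set up the ends. Choose a finite set $\Omega\ni x_0$, say a ball $B_{R_0}(x_0)$ enlarged by the (finitely many, finite) finite components of its complement, so that every component of $V\setminus\Omega$ is infinite; since $\Omega$ is finite and $G$ connected and locally finite, these are finitely many ends $\Pi_1,\dots,\Pi_k$, and $\partial\Pi_j\subseteq\Omega$. As $x_0\in\Omega$, the identity above gives $\Delta f=-\mu f$ on each $\Pi_j$, so $f$ is a nonnegative generalized subharmonic function there with parameter $\mu=-\alpha$. Domain monotonicity of the bottom of the Dirichlet spectrum (from $C_0(\Pi_j)\subseteq C_0(V)$) yields $\mu_1(\Pi_j)\geq\mu_1(G)>\mu$; for $\dist=d$ one moreover has $\mu_1(\Pi_j)<1$ by Proposition~\ref{prop:less1}, so $a_{-\alpha}(\mu_1(\Pi_j))$ is defined. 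Writing $a_j:=a_{-\alpha}(\mu_1(\Pi_j))$ and $a:=a_{-\alpha}(\mu_1(G))$, a direct inspection of the two formulas in Definition~\ref{def:d1} shows $t\mapsto a_\mu(t)$ is increasing, whence $a_j\geq a>0$.

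It then remains to verify \eqref{eq:decay1} and to sum. Since $f\in\ell^2(V,m)$ and $a_j>0$, for any $R_i\to\infty$ we have $\sum_{(\Pi_j)_{R_i}^{R_i+3s}}f^2e^{-2a_jr}m\leq\sum_{(\Pi_j)_{R_i}^{R_i+3s}}f^2 m\to 0$, so the hypothesis holds on each $\Pi_j$. Theorem~\ref{thm:main1} then gives, for $R\geq R_0+3s$,
\[
\sum_{(\Pi_j)_{R}^{R+3s}}f^2 m\leq C_j\,e^{-2a_jR}D_j,\qquad
C_j=\frac{7e^{10a_js}}{s^2(\mu_1(\Pi_j)+\alpha)},\quad
D_j:=\sum_{(\Pi_j)_{R_0}^{R_0+3s}}f^2e^{2a_jr}m<\infty .
\]
Using $a_j\geq a$ to replace $e^{-2a_jR}$ by $e^{-2aR}$, and noting that for $R>R_0$ the annulus $A_R^{R+3s}$ lies in $\bigsqcup_j\Pi_j$, summation over the finitely many ends gives $\sum_{A_R^{R+3s}}f^2 m\leq\big(\sum_j C_jD_j\big)e^{-2aR}$ for $R\geq R_0+3s$. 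For the remaining bounded range of $R$ the left side is at most $\sum_V f^2 m<\infty$ while $e^{-2aR}$ stays bounded below, so after enlarging the constant we obtain the claim for all $R>0$ with $C$ independent of $R$.

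The main obstacle is the first paragraph: one must justify that the heat-kernel resolvent $g_\alpha(x_0,\cdot)$ coincides with $(-\Delta+\alpha)^{-1}(\tfrac{1}{m_{x_0}}\delta_{x_0})$ as an $\ell^2$ function and that the source identity holds pointwise. This is exactly where the spectral gap $\mu_1(G)>0$ enters, forcing boundedness of the resolvent (hence $\ell^2$-membership, and for $\alpha=0$ nonparabolicity); under Assumption~\ref{ass:1} the Laplacian is self-adjoint and the pointwise and operator actions agree, so these are standard facts. Everything afterwards --- domain monotonicity of $\mu_1$, monotonicity of $a_\mu(\cdot)$, and the finite summation over ends --- is routine bookkeeping.
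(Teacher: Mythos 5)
Your proposal is correct, but it verifies the hypotheses of Theorem~\ref{thm:main1} by a genuinely different mechanism than the paper does. The paper never applies Theorem~\ref{thm:main1} to $g_\alpha(x_0,\cdot)$ itself: taking $\Omega=\{x_0\}$ and following the exhaustion scheme of Lemma~\ref{lem:keyest}, it applies the theorem to the truncated resolvents $g_{\alpha,R_i}(x_0,\cdot)=\int_0^\infty e^{-\alpha t}p_{t,R_i}(x_0,\cdot)\,dt$ built from the Dirichlet heat kernels on $B_{R_i}(x_0)$; these are finitely supported, so the decay hypothesis \eqref{eq:decay1} holds trivially for them, and one then passes to the monotone limit $g_{\alpha,R_i}\uparrow g_\alpha$ on the (finite) annuli. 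The only global input the paper needs is pointwise finiteness of $g_\alpha$, which for $\alpha=0$ is non-parabolicity, a consequence of $\mu_1(G)>0$. You instead apply the theorem once, directly to $f=g_\alpha(x_0,\cdot)$, and verify \eqref{eq:decay1} from the spectral bound $\|(-\Delta+\alpha)^{-1}\|\leq(\mu_1(G)+\alpha)^{-1}$, which gives $f\in\ell^2(V,m)$ and hence vanishing tail sums. Both routes are sound, and both reduce to Theorem~\ref{thm:main1} on the finitely many ends with $\mu=-\alpha$, using domain monotonicity of $\mu_1$ and monotonicity of $t\mapsto a_{-\alpha}(t)$. What the paper's exhaustion buys is self-containedness: it uses only the defining monotone-limit property of the minimal heat kernel, and never needs to identify the kernel integral with the operator resolvent applied to $\tfrac{1}{m_{x_0}}\delta_{x_0}$, nor that the generator of the minimal semigroup acts pointwise as the formal Laplacian --- exactly the ``standard facts'' your first paragraph leans on (they do hold under Assumption~\ref{ass:1}, but they are the genuinely nontrivial part of your argument). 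What your route buys is the stronger intermediate conclusion $g_\alpha(x_0,\cdot)\in\ell^2(V,m)$ with an explicit norm bound, which in the paper's treatment is only available a posteriori.

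One minor bookkeeping slip: since your $\Omega$ contains the finite components of $V\setminus B_{R_0}(x_0)$, which may contain vertices with $r(x)>R_0$, it is not true in general that $A_R^{R+3s}\subset\bigsqcup_j\Pi_j$ for all $R>R_0$, and likewise the threshold required by Theorem~\ref{thm:main1} is $\max_{x\in\partial\Pi_j}r(x)$, which can exceed $R_0$. Both issues are confined to a bounded range of $R$ and are absorbed by the same constant-enlargement step you already perform at the end (using $f\in\ell^2(V,m)$), so this does not affect the validity of the argument.
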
 

\begin{example}\label{ex:tree1} For any integer $N\geq 3,$ let $T_N$ be the $N$-regular tree, i.e. the homogeneous tree of degree $N.$ Consider the standard weights, $w_{xy}=1$ for $\{x,y\}\in E,$ $m_x=N$ for all $x\in V,$ and let $\dist=d.$ Let $x_0$ be the root of the tree.
Then $$\mu_1(T_N)=\mu_e(T_N)=1-\frac{2\sqrt{N-1}}{N}.$$  For any $\alpha\geq 0,$ any vertex $x$ with $d(x,x_0)=n,$
$$g_\alpha(x_0,x)=\frac{1}{N(\alpha+1-\frac1b)}b^{-n},$$ where $$b=\frac12((\alpha+1)N+\sqrt{(\alpha+1)^2N^2-4N+4}).$$ In particular, $b$ is the solution to the following equation $b+\frac{N-1}{b}=(\alpha+1)N.$ Hence,
$$\sum_{A_{n}^{n+3}}g_{\alpha}^2(x_0,\cdot)m\sim C(N,\alpha) \left(\frac{N-1}{b^2}\right)^n,\quad n\to \infty.$$ Moreover, $(\frac{N-1}{b^2})^n=e^{-2an},$ where $a=a_{-\alpha}(\mu_1(T_N)).$ Therefore, the $\ell^2$ decay estimate in Theorem~\ref{thm:app1} (and that in Thereom~\ref{thm:main1}) is sharp.

\end{example}

\begin{defi}\label{def:para1} An end $\Pi$ is called non-parabolic if there exists $f:\overline{\Pi}\to \R_+$ satisfying $\Delta f=0$ on $\Pi,$ $f\big|_{\partial \Pi}\equiv 1$ and $$\liminf_{x\to \Pi(\infty)}f(x)<1,$$ where $x\to \Pi(\infty)$ is understood as $x\to \infty$ and $x\in \Pi.$ Here $f$ is called a barrier function on $\Pi.$ Otherwise, $\Pi$ is called parabolic.
\end{defi}

We have the following corollaries.
\begin{coro}\label{thm:volume} Let $G=(V,E,m,w)$ be an infinite weighted graph, and $\rho$ be an intrinsic metric satisfying Assumption~\ref{ass:1}. Let $\Pi$ be an end satisfying $\mu_1(\Pi)>0$ and $a=a_0(\mu_1(\Pi)).$ Let $R_0\geq \max_{x\in \partial \Pi}r(x).$ Then the following are equivalent:

\begin{enumerate}[(i)]
\item $\Pi$ is a parabolic end.
\item There exist $R_i\to \infty,$ $i\to \infty,$ such that
$$\sum_{\Pi_{R_i}^{R_i+3s}}e^{-2ar}m\to 0,\quad i\to \infty.$$
\item $\Pi$ has finite total volume, i.e. $|\Pi|<\infty.$
\item  
For any $R\geq R_0+3s,$
$$|\Pi_R^{R+3s}|\leq Ce^{-2a(R-R_0)}|\Pi_{R_0}^{R_0+3s}|.$$
\item For any $R\geq R_0+3s,$ $$|\Pi|-|\Pi_R|\leq Ce^{-2a(R-R_0)}|\Pi_{R_0}^{R_0+3s}|.$$
\end{enumerate}
\end{coro}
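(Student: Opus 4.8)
The plan is to treat the equivalences (ii)--(v) first and then splice in the parabolicity condition (i). For (ii)--(v) I would apply Theorem~\ref{thm:main1} to the constant function $f\equiv 1$ with $\mu=0$, which is legitimate since $\Delta f=0\ge-\mu f$ and $a=a_0(\mu_1(\Pi))>0$ is exactly the exponent in the corollary. With this choice $\sum_{\Pi_R^{R+3s}}f^2m=|\Pi_R^{R+3s}|$ and the hypothesis \eqref{eq:decay1} is literally (ii), so (ii)$\Rightarrow$(iv) is immediate after bounding the weighted initial term $\sum_{\Pi_{R_0}^{R_0+3s}}e^{2ar}m\le e^{2a(R_0+3s)}|\Pi_{R_0}^{R_0+3s}|$ and absorbing the factor into the constant. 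For (iv)$\Rightarrow$(v) I would cover $\{x\in\Pi:r(x)>R\}$ by the shells $\Pi_{R+3ks}^{R+3(k+1)s}$, $k\ge 0$, apply (iv) to each base radius, and sum the resulting geometric series in $e^{-6aks}$. Then (v)$\Rightarrow$(iii) is trivial, since $|\Pi_{R_0}^{R_0+3s}|<\infty$ by Assumption~\ref{ass:1}(i) forces $|\Pi|=|\Pi_R|+(|\Pi|-|\Pi_R|)<\infty$; and (iii)$\Rightarrow$(ii) follows by taking $R_i=i$ and using $e^{-2ar}\le 1$ together with $\sum_{\Pi_{R_i}^{R_i+3s}}e^{-2ar}m\le|\Pi|-|\Pi_{R_i}|\to 0$.

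To bring in (i) I would introduce equilibrium potentials: for $R>R_0$ let $f_R$ be the finitely supported solution of $\Delta f_R=0$ on $\Pi\cap B_R$ with $f_R\equiv 1$ on $\partial\Pi$ and $f_R\equiv 0$ outside $B_R$. A comparison/minimum principle on the finite sets $\Pi\cap B_R$ shows that $f_R$ increases to a harmonic limit $f_\infty$ with $0\le f_\infty\le 1$ and $f_\infty|_{\partial\Pi}\equiv 1$, that any nonnegative harmonic $f$ with $f|_{\partial\Pi}\equiv 1$ dominates $f_\infty$, and hence that $\Pi$ is parabolic in the sense of Definition~\ref{def:para1} if and only if $f_\infty\equiv 1$. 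Setting $g_\infty:=1-f_\infty$, this says precisely that $\Pi$ is non-parabolic iff $g_\infty$ is a nonnegative bounded harmonic function on $\Pi$ vanishing on $\partial\Pi$ with $g_\infty\not\equiv 0$.

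For (i)$\Rightarrow$(iii) I would use the spectral gap. If $\Pi$ is parabolic then $f_R\uparrow 1$ pointwise on $\Pi$, and the restrictions $\phi_R:=f_R\mathbf{1}_\Pi\in C_0(\Pi)$ satisfy, by the Rayleigh characterization of $\mu_1(\Pi)$,
\[\mu_1(\Pi)\sum_{\Pi}\phi_R^2\,m\le \tfrac12\sum_{x,y}w_{xy}(\phi_R(x)-\phi_R(y))^2.\]
The right-hand energy splits into edges inside $\Pi$, bounded by the capacity $\mathcal{E}(f_R)$ (decreasing in $R$, hence uniformly bounded), and edges joining $\Pi$ to the finite set $\partial\Pi$, bounded by $\sum_{y\in\partial\Pi}\sum_{x\sim y}w_{xy}<\infty$ because $0\le f_R\le 1$; thus the energy is bounded uniformly in $R$ by some $C'<\infty$. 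Letting $R\to\infty$ and using monotone convergence $\sum_\Pi\phi_R^2\,m\to|\Pi|$ gives $\mu_1(\Pi)|\Pi|\le C'$, so $|\Pi|<\infty$ as $\mu_1(\Pi)>0$.

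For (iii)$\Rightarrow$(i) I would show $g_\infty\equiv 0$ when $|\Pi|<\infty$. Since $0\le g_\infty\le 1$, finite volume gives $\sum_\Pi g_\infty^2 m\le|\Pi|<\infty$ and, exactly as in (iii)$\Rightarrow$(ii), the hypothesis \eqref{eq:decay1} for $g_\infty$ with $\mu=0$; Theorem~\ref{thm:main1} then yields $\sum_{\Pi_R^{R+3s}}g_\infty^2 m\le C e^{-2aR}\sum_{\Pi_{R_0}^{R_0+3s}}g_\infty^2 e^{2ar}m\to 0$. Feeding this decay into a cutoff (Caccioppoli) argument---testing $\Delta g_\infty=0$ against $\eta_R^2 g_\infty$ with cutoffs $\eta_R$ supported near radius $R$---the error terms are controlled by $\sum_{\mathrm{shell}}g_\infty^2 m\to 0$, so the Dirichlet energy $\tfrac12\sum_{x,y}w_{xy}(g_\infty(x)-g_\infty(y))^2$ vanishes; as $\overline{\Pi}$ is connected and $g_\infty|_{\partial\Pi}=0$, this forces $g_\infty\equiv 0$, i.e.\ $\Pi$ is parabolic. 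I expect the main obstacle to be precisely this last step: rigorously justifying the summation by parts at infinity for a possibly unbounded Laplacian. The quantitative $\ell^2$ decay from Theorem~\ref{thm:main1} is exactly what makes the cutoff errors vanish and legitimizes discarding the boundary term at infinity; together with the cycle (ii)--(v), closing (i)$\Leftrightarrow$(iii) establishes all five equivalences.
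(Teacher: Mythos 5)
Your cycle (ii)$\Rightarrow$(iv)$\Rightarrow$(v)$\Rightarrow$(iii)$\Rightarrow$(ii) is exactly the paper's argument: apply Theorem~\ref{thm:main1} with $f\equiv 1$, $\mu=0$, sum the shell estimate geometrically, and use the trivial implications. Where you genuinely diverge is in how (i) is tied in. The paper proves (i)$\Rightarrow$(iv) by applying Lemma~\ref{lem:keyest} to the exhausting harmonic functions $f_R$ of \eqref{eq:frc}: since each $f_{R_i}$ is finitely supported, the decay hypothesis \eqref{eq:decay1} holds trivially for it, Theorem~\ref{thm:main1} gives a uniform estimate, and passing to the limit $f\equiv 1$ (parabolicity, via Proposition~\ref{prop:para}) yields (iv) directly; and it proves (iii)$\Rightarrow$(i) by observing $\tilde f=f-1\in\ell^2(\Pi,m)$ and citing the discrete Yau $\ell^2$ Liouville theorem of \cite{HuaKeller14,HuaJost11} (with Dirichlet boundary modification) to force $\tilde f\equiv 0$. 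You instead prove (i)$\Rightarrow$(iii) by a capacity/Rayleigh-quotient argument --- $\mu_1(\Pi)\sum_\Pi \phi_R^2 m \le \mathcal{E}(\phi_R)\le C'$ uniformly, since the equilibrium potentials have non-increasing energy and the edges meeting the finite set $\partial\Pi$ contribute a fixed finite amount, whence $\mu_1(\Pi)\,|\Pi|\le C'$ by monotone convergence --- and you prove (iii)$\Rightarrow$(i) by feeding $g_\infty=1-f_\infty$ into Theorem~\ref{thm:main1} (the decay hypothesis holds because $0\le g_\infty\le 1$ and $|\Pi|<\infty$) and then running a Caccioppoli argument to kill the Dirichlet energy, i.e.\ you reprove the special case of the Liouville theorem you need rather than cite it. Both routes are sound: your (i)$\Rightarrow$(iii) is more elementary than the paper's (i)$\Rightarrow$(iv) (it never invokes the main theorem), and your (iii)$\Rightarrow$(i) makes the corollary self-contained at the price of carrying out the cutoff estimates, which do go through here because Assumption~\ref{ass:1}(i) makes all test functions finitely supported and the intrinsic-metric bound $\sum_y w_{xy}\dist^2(x,y)\le m_x$ controls the cross terms; the exponential shell decay (or even just $g_\infty\in\ell^2$) then makes the error terms vanish along a sequence of radii. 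One trivial point to patch: in (iii)$\Rightarrow$(ii), $\Pi_{R_i}^{R_i+3s}$ meets the sphere $\{r=R_i\}$, so bound its measure by $|\Pi|-|\Pi_{R_i-1}|$ rather than $|\Pi|-|\Pi_{R_i}|$.
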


\begin{coro}\label{thm:volume2} Let $G=(V,E,m,w)$ be an infinite weighted graph, and $\dist$ be an intrinsic metric satisfying Assumption~\ref{ass:1}. Let $\Pi$ be an end satisfying $\mu_1(\Pi)>0$ and $a=a_0(\mu_1(\Pi)).$ Then the following are equivalent:

\begin{enumerate}[(i)]
\item $\Pi$ is a non-parabolic end.
\item There exist positive constants $R_0$ and $\epsilon_0$ such that for any $R\geq R_0,$
$$\sum_{\Pi_{R}^{R+3s}}e^{-2a r}m\geq \epsilon_0.$$
\item $\Pi$ has infinite total volume, i.e. $|\Pi|=\infty.$
\item  
For sufficiently large $R,$
$$|\Pi_R^{R+3s}|\geq Ce^{2aR},$$ where $C$ depends on $\Pi.$
\end{enumerate}
\end{coro}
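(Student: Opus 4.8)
The plan is to treat Corollary~\ref{thm:volume2} as the logical mirror of Corollary~\ref{thm:volume}: conditions (i), (ii), (iii) here are precisely the negations of conditions (i), (ii), (iii) there, while (iv) will be shown equivalent to (ii) by an elementary two-sided estimate on each annulus. First I would record the negation correspondence. Non-parabolicity (i) is by definition the failure of parabolicity, i.e. the negation of Corollary~\ref{thm:volume}(i); and $|\Pi|=\infty$ (iii) is the negation of Corollary~\ref{thm:volume}(iii). For the middle condition, note that Corollary~\ref{thm:volume}(ii) asserts the existence of $R_i\to\infty$ with $\sum_{\Pi_{R_i}^{R_i+3s}}e^{-2ar}m\to 0$; since $e^{-2ar}\geq 0$, this is equivalent to $\liminf_{R\to\infty}\sum_{\Pi_R^{R+3s}}e^{-2ar}m=0$, whose negation (a positive $\liminf$) is exactly the statement in (ii) here, namely the existence of $R_0,\epsilon_0>0$ with $\sum_{\Pi_R^{R+3s}}e^{-2ar}m\geq\epsilon_0$ for all $R\geq R_0$. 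Because Corollary~\ref{thm:volume} proves its (i), (ii), (iii) equivalent, taking contrapositives yields (i)$\Leftrightarrow$(ii)$\Leftrightarrow$(iii) of the present statement at once.

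It then remains to link (iv) to this chain, which I would do by proving (ii)$\Leftrightarrow$(iv). The one fact to verify at the outset is that $a=a_0(\mu_1(\Pi))>0$: since $\mu=0<\mu_1(\Pi)$, Definition~\ref{def:d1} exhibits $a$ as a logarithm of a quantity strictly larger than $1$ in both the $\rho\neq d$ case ($a=\frac1s\log(1+s\sqrt{2\mu_1})$) and the $\rho=d$ case (where $\mu_1\in(0,1)$, so $\frac{1}{1-\mu_1}>1$). Hence $a>0$ and $r\mapsto e^{-2ar}$ is strictly decreasing. On the annulus $\Pi_R^{R+3s}$ one has $R\leq r\leq R+3s$, so $e^{-2a(R+3s)}\leq e^{-2ar}\leq e^{-2aR}$ pointwise; summing against $m$ gives the sandwich
$$e^{-2a(R+3s)}\,|\Pi_R^{R+3s}|\ \leq\ \sum_{\Pi_R^{R+3s}}e^{-2ar}m\ \leq\ e^{-2aR}\,|\Pi_R^{R+3s}|.$$

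From the right-hand inequality, (ii) gives $|\Pi_R^{R+3s}|\geq e^{2aR}\sum_{\Pi_R^{R+3s}}e^{-2ar}m\geq \epsilon_0 e^{2aR}$ for $R\geq R_0$, which is (iv) with $C=\epsilon_0$. Conversely, from the left-hand inequality, (iv) gives $\sum_{\Pi_R^{R+3s}}e^{-2ar}m\geq e^{-2a(R+3s)}Ce^{2aR}=Ce^{-6as}$ for all large $R$, which is (ii) with $\epsilon_0=Ce^{-6as}$. This closes the loop and establishes the equivalence of all four conditions.

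I do not expect a genuine obstacle here: the analytic content relating parabolicity, the weighted boundary sums, and the total volume is already carried by Corollary~\ref{thm:volume} (and ultimately by Theorem~\ref{thm:main1}). The only points requiring care are bookkeeping ones, namely confirming the exact form of the negations — in particular that ``$\exists\,R_i\to\infty$ with the sum tending to $0$'' negates to a uniform positive lower bound for all large $R$, which uses $e^{-2ar}\geq 0$ — and checking $a>0$ so that the monotonicity of $e^{-2ar}$ justifies the annulus sandwich.
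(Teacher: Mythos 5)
Your proposal is correct and follows essentially the same route as the paper: both reduce the equivalence of (i), (ii), (iii) to Corollary~\ref{thm:volume} by negation, and both obtain (iv) from (ii) via the pointwise bound $e^{-2ar}\leq e^{-2aR}$ on the annulus $\Pi_R^{R+3s}$. The only (immaterial) difference is how you close the cycle: you prove (iv)$\Rightarrow$(ii) using the lower bound $e^{-2ar}\geq e^{-2a(R+3s)}$, whereas the paper instead notes that (iv)$\Rightarrow$(iii) is trivial; your explicit checks that $a>0$ and that the negation of Corollary~\ref{thm:volume}(ii) is exactly condition (ii) here are sound points the paper leaves implicit.
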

\begin{rem} Let $\Pi$ be an end of the $N$-regular tree $T_N$ with respect to the root $\{x_0\},$ see Example~\ref{ex:tree1}. Let $\dist=d.$ Note that
$$|\Pi_n^{n+3}|\sim C (N-1)^n,\quad n\to \infty.$$ We know that $\mu_1(\Pi)=1-\frac{2\sqrt{N-1}}{N},$ which yields that $$a_0(\mu_1(\Pi))=\frac12\log(N-1).$$ Hence the volume growth condition of $(iv)$ in Corollary~\ref{thm:volume2} is sharp.
\end{rem}


We define the (modified) volume entropy of $G$ as $$\wt{\tau}(G):=\left\{\begin{array}{ll}{\displaystyle \liminf_{R\to \infty}}\,\frac{1}{R}\log|B_R(x_0)|,& \mathrm{if}\ |V|=\infty,\\
{\displaystyle  \liminf_{R\to \infty}}(-\frac{1}{R}\log |V\setminus B_R(x_0)|),&\mathrm{if}\  |V|<\infty.\\ \end{array}\right.$$ Note that $\wt{\tau}(G)$ is a variant of $\tau(M)$ for a manifold.
 

The following corollary is a discrete generalization of Brooks' results \cite{Brooks1981,Brooks1984}. For graphs with infinite volume, see analogous results in \cite{DoKa88,OU1994,Fujiwaragrowth,HKW13JLMS}. 
\begin{coro}\label{coro:app3}  Let $G=(V,E,m,w)$ be an infinite weighted graph with $\mu_e(G)>0,$ and $\rho$ be an intrinsic metric satisfying Assumption~\ref{ass:1}. For any $0<\epsilon<\mu_e(G),$ let $a=a_0(\mu_e(G)-\epsilon).$ Then for any sufficiently large $R,$
 $$\begin{array}{ll}|B_R(x_0)|\geq Ce^{2aR },& \mathrm{if}\ |V|=\infty,\ \mathrm{and}\\
 |V\setminus B_R(x_0)| \leq Ce^{-2aR},& \mathrm{if}\ |V|<\infty.\\ \end{array}$$
In particular, \begin{equation}\label{eq:be1}\mu_e(G)\leq a_0^{-1}\left(\frac{\wt{\tau}(G)}{2}\right),\end{equation} where $a_0^{-1}(\cdot)$ is the inverse function of $a_0(\cdot).$


\end{coro}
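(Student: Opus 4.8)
The plan is to reduce the statement about the essential spectrum $\mu_e(G)$ to statements about individual ends, to which the volume dichotomies of Corollaries~\ref{thm:volume} and~\ref{thm:volume2} (which encode Theorem~\ref{thm:main1} applied to $f\equiv 1$, $\mu=0$) apply directly. The crucial input is a decomposition principle (Persson-type theorem) for the essential spectrum: for every $\epsilon\in(0,\mu_e(G))$ there is a finite set $\Omega\subset V$, which I may enlarge so that $x_0\in\Omega$, with $\mu_1(V\setminus\Omega)>\mu_e(G)-\epsilon$. Since $C_0(\Pi)\subset C_0(V\setminus\Omega)$ for each connected component $\Pi$ of $V\setminus\Omega$, the Rayleigh quotient characterization gives $\mu_1(\Pi)\geq\mu_1(V\setminus\Omega)>\mu_e(G)-\epsilon>0$ for every end $\Pi$ with respect to $\Omega$. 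Because $\Omega$ is finite and $G$ is locally finite, $\partial\Omega$ is finite, and each component of $V\setminus\Omega$ meets $\partial\Omega$; hence there are finitely many components, in particular finitely many ends. Writing $a=a_0(\mu_e(G)-\epsilon)$ and $a'_\Pi=a_0(\mu_1(\Pi))$, the monotonicity of $t\mapsto a_0(t)$ (immediate from Definition~\ref{def:d1} with $\mu=0$) yields $a'_\Pi>a$ for each end.

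For $|V|=\infty$: since $\Omega$ and the finitely many finite components all have finite measure, at least one end $\Pi$ has $|\Pi|=\infty$. By the equivalence (iii)$\Rightarrow$(iv) of Corollary~\ref{thm:volume2}, $|\Pi_R^{R+3s}|\geq C e^{2a'_\Pi R}$ for all large $R$. Since $\Pi_R^{R+3s}\subset B_{R+3s}(x_0)$, I get $|B_{R+3s}(x_0)|\geq C e^{2a'_\Pi R}$; replacing $R$ by $R-3s$ and using $a'_\Pi>a$ to absorb the factor $e^{-6a'_\Pi s}$ into the constant gives $|B_R(x_0)|\geq C' e^{2aR}$ for large $R$.

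For $|V|<\infty$: every end satisfies $|\Pi|\leq|V|<\infty$, so by the equivalence (iii)$\Rightarrow$(v) of Corollary~\ref{thm:volume}, for each end and $R\geq R_0+3s$ one has $|\Pi|-|\Pi_R|\leq C_\Pi e^{-2a'_\Pi(R-R_0)}|\Pi_{R_0}^{R_0+3s}|$. For $R$ large enough, $\Omega$ and all finite components lie in $B_R(x_0)$, so $V\setminus B_R(x_0)$ is the disjoint union of the sets $\Pi\setminus B_R(x_0)$ over the finitely many ends, with $|\Pi\setminus B_R(x_0)|=|\Pi|-|\Pi_R|$. Summing over the ends and using $e^{-2a'_\Pi(R-R_0)}\leq e^{-2a(R-R_0)}$ bounds this finite sum by $Ce^{-2aR}$.

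Finally, the inequality \eqref{eq:be1} follows by taking logarithms in the two volume bounds, dividing by $R$, and passing to the $\liminf$: in both cases this gives $\wt\tau(G)\geq 2a=2a_0(\mu_e(G)-\epsilon)$, hence $a_0^{-1}(\wt\tau(G)/2)\geq\mu_e(G)-\epsilon$ by monotonicity of $a_0^{-1}$. Letting $\epsilon\to0$ and using continuity of $a_0^{-1}$ yields $\mu_e(G)\leq a_0^{-1}(\wt\tau(G)/2)$. I expect the main obstacle to be establishing the decomposition principle $\mu_e(G)=\lim_{\Omega}\mu_1(V\setminus\Omega)$ in this weighted, possibly unbounded setting and checking that it furnishes the uniform lower bound $\mu_1(\Pi)>\mu_e(G)-\epsilon$ on all ends simultaneously; the remaining bookkeeping (finiteness of the number of ends, absorbing the $s$-dependent constants, and the $\epsilon\to0$ limit) is routine.
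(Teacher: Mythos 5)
Your proposal is correct and follows essentially the same route as the paper's own proof: both invoke a Persson-type decomposition principle to get a finite set (the paper uses a ball $B_{R_0}(x_0)$) outside of which $\mu_1 \geq \mu_e(G)-\epsilon$, use local finiteness to reduce to finitely many ends, apply the monotonicity of $\mu_1$ and of $a_0(\cdot)$, invoke Corollary~\ref{thm:volume2} (resp.\ Corollary~\ref{thm:volume}) on an infinite-volume end (resp.\ on all ends) according to whether $|V|=\infty$ or $|V|<\infty$, and conclude \eqref{eq:be1} by taking logarithms, liminf, and $\epsilon\to 0$. The decomposition principle you flag as the main obstacle is exactly the step the paper also asserts without proof, so there is no substantive divergence between the two arguments.
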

\begin{rem} 
\begin{enumerate}[(i)]
\item The above result for graphs with finite volume is new.
\item For $\rho=d,$ $a_0^{-1}(x)=\frac{(e^{x}-1)^2}{1+e^{2x}}.$
Consider the homogeneous tree $T_N,$ see Example~\ref{ex:tree1}. 
Note that $\wt{\tau}(T_N)=\log(N-1)$ and $\mu_e(T_N)=1-\frac{2\sqrt{N-1}}{N}.$ Hence, the inequality \eqref{eq:be1} is in fact an equality in this case. This was also obtained by \cite{Fujiwaragrowth}.
\end{enumerate}
\end{rem}

Moreover, we prove that any $\ell^q$-summable ($q\geq 2$) harmonic function on an end $\Pi$ with $\mu_1(\Pi)>0$ is bounded, and in fact it has exponential decay on the end, see Corollary~\ref{coro:app4}.

The paper is organized as follows: In the next section, we give a new proof of the Li-Wang estimate. In Section~3, we  recall some basic properties of graphs. In Section~\ref{sec:proof}, we prove the main result, Theorem~\ref{thm:main1}. Section~\ref{sec:app} is devoted to various applications. In this paper, for simplicity the constant $C$ may change from line to line.

\section{A new proof of Li-Wang's result}
In this section, we give a simplified proof of an inequality of Li and Wang (\cite{LiWang01}).

\begin{thm}\label{thm:LiWang}Let $M$ be a complete Riemannian manifold. Suppose that $\Pi$ is an end of $M$ with respect to $B_{R_0}(p)$ such that $\mu_1(\Pi)>\mu$ for some constant $\mu.$ Let $a=\sqrt{\mu_1(\Pi)-\mu}>0.$ Let $f$ be  a nonnegative  function satisfying 
\begin{equation}\label{1}
\Delta f\geq  -\mu f.
\end{equation}
 If there is a sequence $R_j\to\infty$ such that 
\begin{equation}\label{eq:gc1}\int_{\Pi_{R_j}^{R_j+1}} e^{-2ar} f^2\to 0,\end{equation} then 
$$\int_{\Pi_\rho^{\rho+1}}f^2\leq \frac{2a+1}{a^2}e^{-2a\rho}\int_{\Pi_{R_0}^{R_0+1}}e^{2ar}f^2$$
for $\rho>R_0+1$. \end{thm}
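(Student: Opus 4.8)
The plan is to run a single integration-by-parts identity against the test function $\phi^2 f$ and then feed the outcome into the spectral gap $\mu_1(\Pi)>\mu$; the payoff is that this route makes the troublesome cross term disappear on its own. Concretely, I would take $\phi$ a Lipschitz function with compact support in $\overline{\Pi}$ that vanishes on $\partial\Pi$. Since $f\ge 0$, the function $\phi^2 f$ is an admissible nonnegative test function, and testing \eqref{1} against it and integrating by parts gives $\int_\Pi \phi^2|\nabla f|^2 + 2\int_\Pi \phi f\,\nabla\phi\cdot\nabla f \le \mu\int_\Pi \phi^2 f^2$. The left-hand side equals $\int_\Pi|\nabla(\phi f)|^2 - \int_\Pi|\nabla\phi|^2 f^2$ exactly, so the mixed term is absorbed for free. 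Combining with $\int_\Pi|\nabla(\phi f)|^2\ge\mu_1(\Pi)\int_\Pi(\phi f)^2$ and recalling $a^2=\mu_1(\Pi)-\mu$ yields the key inequality $a^2\int_\Pi\phi^2 f^2\le\int_\Pi|\nabla\phi|^2 f^2$ for every such $\phi$. This is precisely the estimate Li and Wang reach only after extracting a positive term out of the mixed term, and it is what one hopes will survive the passage to graphs.

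Next I would specialize to radial test functions $\phi=\phi(r)$, where $r(x)=d(x,p)$. Because $|\nabla r|\le 1$ almost everywhere, the key inequality becomes $\int_\Pi\big(\phi'(r)^2-a^2\phi(r)^2\big)f^2\ge 0$, with $e^{ar}$ and $e^{-ar}$ as the two null profiles of the integrand. I would then build a boundary-layer profile $\phi$: fixing $\rho>R_0+1$ and a large parameter $R_j$, let $\phi$ vanish at $r=R_0$ and rise across $[R_0,R_0+1]$, grow like $e^{ar}$ on $[R_0+1,\rho]$ (where $\phi'(r)^2-a^2\phi(r)^2=0$), sit on a short plateau on $[\rho,\rho+1]$ (where $\phi'(r)^2-a^2\phi(r)^2=-a^2\phi(\rho)^2<0$), decay like $e^{-ar}$ afterwards, and finally be cut off to $0$ across $[R_j,R_j+1]$. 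With this choice every region outside the rise at $R_0$, the plateau at $\rho$, and the outer cutoff contributes nonpositively (here $|\nabla r|\le 1$ only helps), so the inequality reads $\int_{\text{rise}}(\phi')^2 f^2 + (\text{cutoff term}) \ge a^2\phi(\rho)^2\int_{\Pi_\rho^{\rho+1}}f^2$.

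The third step is the limit $R_j\to\infty$. The cutoff term is controlled by $\phi(R_j)^2\int_{\Pi_{R_j}^{R_j+1}}f^2$, and since $\phi(R_j)^2$ is comparable to $e^{4a\rho}e^{-2aR_j}$ this is a fixed multiple (in $\rho$) of $\int_{\Pi_{R_j}^{R_j+1}}e^{-2ar}f^2$, which tends to $0$ by the growth hypothesis \eqref{eq:gc1}. Passing to the limit removes the boundary-at-infinity contribution and leaves $a^2\phi(\rho)^2\int_{\Pi_\rho^{\rho+1}}f^2 \le \int_{\Pi_{R_0}^{R_0+1}}\phi'(r)^2 f^2$. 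Since $\phi(\rho)$ is comparable to $e^{a\rho}$ and $\phi'$ on $[R_0,R_0+1]$ is comparable to $e^{aR_0}\lesssim e^{ar}$, dividing out $\phi(\rho)^2$ produces the asserted bound $\int_{\Pi_\rho^{\rho+1}}f^2\le \frac{2a+1}{a^2}e^{-2a\rho}\int_{\Pi_{R_0}^{R_0+1}}e^{2ar}f^2$, the explicit constant $\frac{2a+1}{a^2}$ emerging from the precise shape chosen on $[R_0,R_0+1]$ together with the plateau.

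I expect the main obstacle to be twofold. First, the justification of the testing step: $f$ is only a weak generalized subharmonic function, so I must ensure $\phi^2 f\in W^{1,2}_0(\Pi)$, that the integration by parts is legitimate, and that $\phi f$ is admissible in the Rayleigh quotient defining $\mu_1(\Pi)$ — routine elliptic regularity and density arguments, but they must be in place. Second, and more essential, is the passage to the limit at infinity: the entire decay statement hinges on being able to discard the outer cutoff, which is exactly why the hypothesis is phrased along a sequence $R_j$ with $\int_{\Pi_{R_j}^{R_j+1}}e^{-2ar}f^2\to 0$ rather than as a pointwise or uniform bound. Extracting the clean constant $\frac{2a+1}{a^2}$ (rather than merely some $C/a^2$) is the final delicate point and is what forces the specific rise-and-plateau profile; I would optimize these shapes only at the very end.
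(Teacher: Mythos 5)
Your proposal is correct and is essentially the paper's own argument: your single radial profile (linear rise, $e^{ar}$ growth, plateau at $[\rho,\rho+1]$, $e^{-ar}$ decay, linear cutoff at $[R_j,R_j+1]$) is exactly the paper's test function $\phi\, e^h$ written as one function, and your key inequality $a^2\int_\Pi \phi^2 f^2\le \int_\Pi |\nabla \phi|^2 f^2$, the cancellation on the exponential regions, and the limit along $R_j$ coincide with the paper's steps. The pointwise algebra $(1+a(r-R_0))^2-a^2(r-R_0)^2=1+2a(r-R_0)\le 2a+1$ on the rise reproduces the paper's constant $\frac{2a+1}{a^2}$, so even the sharp constant comes out the same way.
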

\begin{rem} \begin{enumerate}[(i)]
\item The condition \eqref{eq:gc1} is weaker than \eqref{eq:con1}, see e.g. Proposition~\ref{prop:weaker}.
\item Li and Wang \cite{LiWang01} used a mixed term, the second term on the right hand side of \eqref{eq:lwe}, $\int_\Pi\phi\, e^{2h}\langle \nabla\phi,\nabla h\rangle f^2,$ to extract a positive term and derive the desired estimate, see \cite[p.9]{LiWang01} or \cite[p.272]{Li12}. In our proof, via introducing new test functions, we may get rid of such estimates for the mixed term.
\end{enumerate}
\end{rem}
  
 \begin{proof}
 We consider the equality
 \[
 \int_\Pi|\nabla (\phi f)|^2=\int_\Pi|\nabla\phi|^2 f^2-\int_\Pi\phi^2 f\Delta f,
 \]
 where $\phi$ is a Lispchitz function with compact support in $\Pi$. 
 By~\eqref{1} and the fact that 
 \[
  \int_\Pi|\nabla (\phi f)|^2\geq\mu_1(\Pi)\int_\Pi(\phi f)^2,
  \]
we have
 \[
a^2\int_\Pi\phi^2 f^2\leq \int_\Pi|\nabla\phi|^2 f^2.
\]

 Now if we replace $\phi$ by $\phi \,e^h$ for function $h$ to be defined below,  then the above inequality is turned into  the inequality 
considered in Li-Wang's paper.
Expanding $\nabla(\phi\,e^h)$, we have 

 \begin{equation}\label{eq:lwe}
 a^2\int_\Pi\phi^2 e^{2h} f^2\leq \int_\Pi|\nabla\phi|^2 e^{2h} f^2
 +2\int_\Pi\phi\, e^{2h}\langle \nabla\phi,\nabla h\rangle f^2+\int_\Pi
 \phi^2|\nabla h|^2 e^{2h} f^2.
\end{equation}

Let $\phi,h$ be defined as follows. Let $R_0>0$, and let $R>R_0+1$. Define 
\[
\phi (t)=\left\{
\begin{array}{ll}
0 & 0\leq t< R_0\\
t-R_0 &R_0\leq t< R_0+1\\
1 & R_0+1\leq t<R\\
-t+R+1& R \leq t<R+1\\
0 &t\geq R+1
\end{array}
\right..
\]
 
 \begin{center}
\begin{tikzpicture}
 \tkzInit[xmax=9,ymax=3,xmin=-3,ymin=-3]

  \draw[thick, blue] (1,0)node[below]{$\scriptstyle R_0$}--(2,1);  
    \draw[thick,blue] (2,1)--(6.5,1);  
        \draw[dashed] (2,1)--(2,0);  
               \draw[dashed] (6.5,1)--(6.5,0);  
        \draw[thick,blue] (7.5,0)node [below] {$\scriptstyle R+1$}--(6.5,1);  
 \draw[thick,->] (-2,0)--(9,0) node[below] {x}; 
    \draw[thick,->] (0,-1.5)--(0,2.) node[left] {y}; 
    
 
          \node  at (-.2cm, -.2cm) {$0$};
           \node  at (6.5,0)[below,blue ] {$\scriptstyle R$};
          \node at (2,0) [below,blue]{$\scriptstyle R_0+1$};
\end{tikzpicture}
\end{center}

Let $\rho>R_0+1$ but $\rho<R-1$.
The function $h$ is defined as
\[
h(t)=\left\{
\begin{array}{ll}
at &0\leq t<\rho\\
a\rho &\rho\leq t<\rho+1\\
a\rho-a(t-\rho-1) & t\geq \rho+1
\end{array}
\right..
\]

\begin{center}
\begin{tikzpicture}
 \tkzInit[xmax=10,ymax=7,xmin=-3,ymin=-3]

  \draw[thick, blue] (0,0)--(2,1);  
    \draw[thick,blue] (2,1)--(3.5,1);  
        \draw[dashed] (2,0)--(2,1);  
               \draw[dashed] (3.5,1)--(3.5,0)node [below,blue]{$\rho+1$};  
        \draw[thick,blue] (3.5,1)--(7.5,-1);  
 \draw[thick,->] (-2,0)--(9,0) node[below] {x}; 
    \draw[thick,->] (0,-2.)--(0,2.) node[left] {y}; 
    
 
          \node  at (-.2cm, -.2cm) {$0$};
          
          \node at (2,0) [below,blue]{$\rho$};
\end{tikzpicture}
\end{center}

Since 
\[
\left|\int_\Pi\phi\, e^{2h}\langle \nabla\phi,\nabla h\rangle f^2\right|
\leq a\int_{\Pi_{R_0}^{R_0+1}} e^{2h} f^2+ a\int_{\Pi_{R}^{R+1}} e^{2h} f^2,
\]
we have
\[
\begin{split}
&
a^2\int_\Pi\phi^2 e^{2h} f^2\leq (2a+1)\int_{\Pi^{R_0+1}_{R_0}} e^{2h} f^2+(2a+1)\int_{\Pi_R^{R+1}} e^{2h} f^2\\&+a^2\int_{\Pi\backslash\Pi_{\rho_0}^{\rho_0+1}}\phi^2 e^{2h} f^2.
\end{split}
\]
Thus we have
\[
\begin{split}
&
a^2e^{2a\rho}\int_{\Pi_\rho^{\rho+1}}  f^2\leq  (2a+1)\int_{\Pi_{R_0}^{R_0+1}} e^{2h} f^2+(2a+1)e^{(4a+2)\rho} \int_{\Pi_R^{R+1}}  e^{-2ar} f^2.
\end{split}
\]

 For fixed $\rho$, by the decay condition, there is a sequence $R_j\to\infty$ such that 
 \[
 \int_{\Pi_{R_j}^{R_j+1}} e^{-2ar} f^2\to 0.
 \]
If we take $R=R_j$ and let $j\to\infty$, we get
 
 \[
a^2 e^{2a\rho}\int_{\Pi_\rho^{\rho+1}} f^2\leq
(2a+1)\int_{\Pi_{R_0}^{R_0+1}} e^{-2ar} f^2.\]

\end{proof}







\section{Calculus on graphs}
Let $G=(V,E,m,w)$ be a weighted graph. 
For any functions $f,g:V\to\R,$ we introduce the ``carr\'e du champ" operator $\Gamma$ as follows,
$$\Gamma(f,g)=\frac12(\Delta(fg)-g\Delta f-f\Delta g),\qquad {\Gamma(f)=\Gamma(f,f)}.$$ 
For any $x,y\in V,$ and $f:V\to \R,$ we denote $$\nabla_{xy}f=f(y)-f(x).$$
One easily checks that for any $f,g:V\to \R,$ any $x,y\in V,$
\begin{equation}\label{eq:leibniz}\nabla_{xy}(fg)=f(x)\nabla_{xy}g+g(y)\nabla_{xy}f, 
\end{equation}
{and}
\begin{equation}\label{eq:leibniz2}
\Gamma(f,g)=\sum_{x,y} w_{xy} f(x)g(x)\nabla_{xy} f\nabla_{xy} g.
\end{equation}

The following Green's formula is well-known, see e.g. \cite{Grigoryanbook}.
\begin{thm}[Green's formula] Let $f,g:V\to \R,$ and $g\in C_0(V).$
$$\frac{1}{2}\sum_{x,y\in V}w_{xy}\nabla_{xy}f\nabla_{xy}g=-\sum f\Delta gm,$$
{where for any function $f:V\to\R,$ $\sum fm$ (or $ \sum_x fm$) is defined in~\eqref{abc}.}
\end{thm}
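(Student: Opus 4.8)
The plan is to unfold the right-hand side using the definition of $\Delta$, reorganize it as a double sum over ordered pairs $(x,y)$, and then symmetrize in $x$ and $y$ to recover the symmetric gradient expression on the left. Before rearranging anything, I would first record that since $g\in C_0(V)$ has finite support and the graph is locally finite, only finitely many terms in either sum are nonzero: $\Delta g(x)$ vanishes unless $x\in\overline{\supp g}$, a finite set, while $\nabla_{xy}g=0$ unless at least one of $x,y$ lies in $\supp g$, so only finitely many pairs $(x,y)$ with $w_{xy}\neq0$ contribute. Hence both sides are genuinely finite sums and every interchange and relabeling below is legitimate.

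With this in hand, I would begin from the definition of $\Delta$, in which the weight $m_x$ cancels:
\[
-\sum_x f(x)\,\Delta g(x)\,m_x=-\sum_x f(x)\sum_y w_{xy}\big(g(y)-g(x)\big)=-\sum_{x,y}w_{xy}\,f(x)\,\nabla_{xy}g.
\]
Call this quantity $Q$. The crux is to produce a second expression for $Q$ by relabeling $(x,y)\mapsto(y,x)$ in the double sum and using $w_{yx}=w_{xy}$ together with $\nabla_{yx}g=-\nabla_{xy}g$; this gives $Q=\sum_{x,y}w_{xy}\,f(y)\,\nabla_{xy}g$. Averaging the two representations yields
\[
Q=\frac12\sum_{x,y}w_{xy}\big(f(y)-f(x)\big)\nabla_{xy}g=\frac12\sum_{x,y}w_{xy}\,\nabla_{xy}f\,\nabla_{xy}g,
\]
which is exactly the left-hand side.

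I do not anticipate any real obstacle: the statement is a discrete integration by parts, and its entire content is the symmetrization step together with the bookkeeping that guarantees the sums are finite. The only point demanding care is the finiteness justification from the first paragraph—without the finite-support hypothesis on $g$ and the local finiteness of $G$, the relabeling of the double sum would not be automatically valid—so I would be sure to invoke both facts explicitly before reindexing or interchanging any sum.
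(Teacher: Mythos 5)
Your proof is correct. The paper offers no proof of this statement---it is quoted as well-known with a citation to \cite{Grigoryanbook}---so there is nothing to compare line by line; your argument (unfold $\Delta$, cancel $m_x$, relabel $(x,y)\mapsto(y,x)$ using $w_{xy}=w_{yx}$ and $\nabla_{yx}g=-\nabla_{xy}g$, then average) is the standard summation-by-parts proof, and it uses exactly the symmetrization device the paper itself invokes later, e.g.\ in the proof of Proposition~\ref{prop:form1}. Your preliminary finiteness check (local finiteness plus $g\in C_0(V)$, so only finitely many pairs contribute and the reindexing is legitimate) is the one point where rigor is genuinely needed, and you handled it correctly.
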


\begin{prop}\label{prop:form1} Let $f,g:V\to \R,$ and $g\in C_0(V).$ Then 
$$\frac{1}{2}\sum_{x,y\in V}w_{xy}|\nabla_{xy} (fg)|^2= \sum f^2\Gamma(g)m-\sum f\Delta f g^2m-\frac14\sum_{x,y}w_{xy}|\nabla_{xy}f|^2|\nabla_{xy}g|^2.$$
\end{prop}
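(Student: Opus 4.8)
The plan is to expand the left-hand side directly by means of the product rule~\eqref{eq:leibniz} and to match the resulting sums, term by term, against the three sums on the right. Since $g\in C_0(V)$, the difference $\nabla_{xy}g$ vanishes unless $x$ or $y$ lies in $\supp g$, so every sum below is a finite sum and all rearrangements are legitimate. First I would apply~\eqref{eq:leibniz} in the form $\nabla_{xy}(fg)=f(x)\nabla_{xy}g+g(y)\nabla_{xy}f$, square it, and sum against $\tfrac12 w_{xy}$ over all ordered pairs $(x,y)$. This produces three pieces: the term $\tfrac12\sum_{x,y}w_{xy}f(x)^2|\nabla_{xy}g|^2$, a cross term $\sum_{x,y}w_{xy}f(x)g(y)\nabla_{xy}f\,\nabla_{xy}g$, and a tail $\tfrac12\sum_{x,y}w_{xy}g(y)^2|\nabla_{xy}f|^2$. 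The first piece is exactly $\sum f^2\Gamma(g)m$, since $\Gamma(g)(x)=\tfrac{1}{2m_x}\sum_y w_{xy}|\nabla_{xy}g|^2$; this already supplies the first term on the right.

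The key device for the remaining two pieces is symmetrization: because $w_{xy}=w_{yx}$ while $\nabla_{xy}f$ and $\nabla_{xy}g$ change sign under $x\leftrightarrow y$ (so their squares and their product are invariant), each sum over ordered pairs equals the average of itself and its $x\leftrightarrow y$ swap. Applying this to the tail and using $g(x)^2+g(y)^2=|\nabla_{xy}g|^2+2g(x)g(y)$ splits it into $\tfrac14\sum_{x,y}w_{xy}|\nabla_{xy}f|^2|\nabla_{xy}g|^2$ plus a remainder $\tfrac12\sum_{x,y}w_{xy}g(x)g(y)|\nabla_{xy}f|^2$. Applying it to the cross term and using $f(x)g(y)+f(y)g(x)=f(x)g(x)+f(y)g(y)-\nabla_{xy}f\,\nabla_{xy}g$ produces $-\tfrac12\sum_{x,y}w_{xy}|\nabla_{xy}f|^2|\nabla_{xy}g|^2$ together with a symmetric remainder. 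The two correction contributions $+\tfrac14$ and $-\tfrac12$ combine into the single term $-\tfrac14\sum_{x,y}w_{xy}|\nabla_{xy}f|^2|\nabla_{xy}g|^2$, which is the genuinely discrete piece with no Riemannian counterpart.

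It then remains to identify the leftover symmetric sums with $-\sum f\Delta f\,g^2 m$. Writing the latter, via the definition of $\Delta$, as $-\sum_{x,y}w_{xy}f(x)g(x)^2\nabla_{xy}f$ and symmetrizing it as well, the required equality collapses, after factoring out the common $\nabla_{xy}f$, to the elementary polynomial identity $(bd+ac)(d-c)+cd(b-a)=bd^2-ac^2$ in the four values $a=f(x)$, $b=f(y)$, $c=g(x)$, $d=g(y)$, which is verified by direct expansion of both sides. I expect the main obstacle to be purely organizational rather than conceptual: keeping the bookkeeping straight through the \emph{asymmetric} product rule and the repeated symmetrization, and in particular tracking the exact sign and coefficient of the discrete correction term so that the contributions $+\tfrac14$ and $-\tfrac12$ really combine to $-\tfrac14$.

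A cleaner but essentially equivalent route would apply the Green's formula with both arguments equal to $fg\in C_0(V)$, giving $\tfrac12\sum_{x,y}w_{xy}|\nabla_{xy}(fg)|^2=-\sum fg\,\Delta(fg)\,m$, and then substitute $\Delta(fg)=2\Gamma(f,g)+f\Delta g+g\Delta f$ coming from the definition of $\Gamma$. This isolates the term $-\sum fg^2\Delta f\,m=-\sum f\Delta f\,g^2 m$ immediately, but it reduces the identification of the surviving pieces $-2\sum fg\,\Gamma(f,g)m-\sum f^2 g\,\Delta g\,m$ with $\sum f^2\Gamma(g)m-\tfrac14\sum_{x,y}w_{xy}|\nabla_{xy}f|^2|\nabla_{xy}g|^2$ to exactly the same symmetrization and the same polynomial identity, so I would present whichever of the two is typographically shorter.
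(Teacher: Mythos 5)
Your proposal is correct; I checked the decomposition, both symmetrizations, and the closing identity $(bd+ac)(d-c)+cd(b-a)=bd^2-ac^2$, and the coefficients $+\tfrac14$ (from the tail) and $-\tfrac12$ (from the cross term) do combine to the $-\tfrac14$ in the statement. Your primary route, however, is not the paper's. The paper never expands $|\nabla_{xy}(fg)|^2$ pointwise: it starts from Green's formula applied to the pair $(fg,fg)$, namely $\tfrac12\sum_{x,y}w_{xy}|\nabla_{xy}(fg)|^2=-\sum fg\,\Delta(fg)\,m$, expands $\Delta(fg)=g\Delta f+f\Delta g+2\Gamma(f,g)$, uses $\Delta(g^2)=2g\Delta g+2\Gamma(g)$ to peel off $\sum f^2\Gamma(g)m$ and $-\sum f\Delta f\,g^2m$, and then collapses the residue $-\tfrac12\sum f^2\Delta(g^2)m-2\sum fg\,\Gamma(f,g)m$ into $-\tfrac14\sum_{x,y}w_{xy}|\nabla_{xy}f|^2|\nabla_{xy}g|^2$ by one more application of Green's formula together with exactly the symmetrization trick you use. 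So the ``cleaner alternative'' sketched in your closing paragraph is, essentially verbatim, the paper's proof, while your main argument is a genuinely more elementary variant: it needs nothing beyond the definition of $\Delta$, the symmetry of $w_{xy}$, and rearrangement of finite sums (no Green's formula, no $\Gamma$-calculus), and it makes transparent that the discrete correction term is precisely the defect of the asymmetric Leibniz rule under symmetrization. The price is heavier bookkeeping --- two symmetrizations plus a final four-variable polynomial identity, where the paper needs only one symmetrization computation --- which is exactly the organizational risk you flagged; both proofs are complete and correct.
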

\begin{proof} By Green's formula, using $\Delta(g^2)=2g\Delta g+2\Gamma(g),$
\begin{eqnarray*}&&\frac{1}{2}\sum_{x,y\in V}w_{xy}|\nabla_{xy} (fg)|^2\\&=&-\sum fg\Delta(fg)m=-\sum fg(g\Delta f +f\Delta g+2\Gamma(f,g))m\\
&=& \sum f^2\Gamma(g)m-\sum f\Delta f g^2m-\frac12\sum f^2\Delta g^2m-\sum 2fg\Gamma(f,g)m.
\end{eqnarray*}

By Green's formula and the symmetrization, 
\begin{eqnarray*}
&&-\frac12\sum f^2\Delta g^2m-\sum 2fg\Gamma(f,g)m\\
&=&\frac{1}{4}\sum_{x,y}w_{xy}\nabla_{xy}f^2\nabla_{xy}g^2-\sum_{x,y}w_{xy}f(x)g(x)\nabla_{xy}f\nabla_{xy}g\\
&=&
\frac{1}{4}\sum_{x,y}w_{xy}\nabla_{xy}f^2\nabla_{xy}g^2-\frac12\sum_{x,y}w_{xy}(f(x)g(x)+f(y)g(y))\nabla_{xy}f\nabla_{xy}g\\
&=&-\frac14\sum_{x,y}w_{xy}|\nabla_{xy} f|^2|\nabla_{xy} g|^2.
\end{eqnarray*} This proves the proposition.
\end{proof}

\begin{prop}\label{prop:less1} Let $G=(V,E,m,w)$ be a weighted graph and $\Omega\subset V$ contain two vertices $x_1,x_2$ satisfying $x_1\sim x_2.$  Suppose that the combinatorial distance $d$ is an intrinsic metric on $G,$ then $\mu_1(\Omega)<1.$
\end{prop}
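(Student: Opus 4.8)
The plan is to produce an explicit test function in the Rayleigh quotient that forces the quotient strictly below $1$. Since $x_1\sim x_2$ and both vertices lie in $\Omega,$ I would take $f=\mathbf{1}_{\{x_1,x_2\}}\in C_0(\Omega),$ the indicator of the two-point set, and evaluate the variational characterization
\[
\mu_1(\Omega)=\inf_{g\in C_0(\Omega)\setminus\{0\}}\frac{\frac12\sum_{x,y\in V}w_{xy}|g(x)-g(y)|^2}{\sum_{x\in V}g^2(x)m_x}
\]
at $g=f.$

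First I would compute the Dirichlet energy in the numerator. Writing $d_i=\sum_{y\in V}w_{x_iy}$ for the weighted degrees and $w=w_{x_1x_2}$ for the weight of the connecting edge, only edges incident to $x_1$ or $x_2$ contribute. The shared edge $\{x_1,x_2\}$ contributes nothing, since both endpoints carry value $1,$ while each edge from $x_i$ to an outside vertex $y$ contributes $w_{x_iy}.$ Collecting terms, the numerator equals $(d_1-w)+(d_2-w)=d_1+d_2-2w,$ and the denominator is simply $m_{x_1}+m_{x_2}.$ Next I would invoke the intrinsic condition: for the combinatorial distance, $d$ is intrinsic precisely when $\sum_{y}w_{xy}\le m_x$ for all $x,$ i.e. $d_i\le m_{x_i}.$ Hence $d_1+d_2\le m_{x_1}+m_{x_2},$ which gives
\[
\mu_1(\Omega)\le\frac{d_1+d_2-2w}{m_{x_1}+m_{x_2}}\le 1-\frac{2w}{m_{x_1}+m_{x_2}}<1,
\]
because $w=w_{x_1x_2}>0$ (edge weights are strictly positive) and the measures are positive. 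This is exactly the claim.

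I expect no genuine obstacle here: the entire content is the choice of the two-point test function together with the observation that the shared edge contributes nothing to the numerator while still being counted in \emph{both} degrees, so the intrinsic inequality leaves a strictly positive deficit of $2w_{x_1x_2}.$ The only points worth double-checking are that $f\in C_0(\Omega)$ (which holds since $\{x_1,x_2\}\subset\Omega$ is finite) and that $w_{x_1x_2}>0$ (which follows from $w\colon E\to\R_+$ and $x_1\sim x_2$).
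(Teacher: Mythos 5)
Your proof is correct, but it follows a genuinely different route from the paper's. The paper first invokes domain monotonicity, $\mu_1(\Omega)\le\mu_1(\{x_1,x_2\})$, and then tests the Rayleigh quotient on the two-point set with the \emph{one-point} indicator $f=1_{\{x_1\}}$; there the strict inequality comes from a soft spectral fact: the first Dirichlet eigenfunction of the connected set $\{x_1,x_2\}$ is positive at both vertices, so $1_{\{x_1\}}$ cannot be a minimizer, whence $\mu_1(\{x_1,x_2\})<\frac{\sum_y w_{x_1y}}{m_{x_1}}\le 1$. You instead test directly on $\Omega$ with the \emph{two-point} indicator $1_{\{x_1,x_2\}}$ and extract strictness quantitatively: the edge $\{x_1,x_2\}$ drops out of the Dirichlet energy yet is counted in both degree bounds $d_i\le m_{x_i}$ coming from the intrinsic-metric condition, leaving the explicit deficit $2w_{x_1x_2}>0$. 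Your computation is sound (the numerator is $d_1+d_2-2w_{x_1x_2}$ since the graph is simple, the denominator is $m_{x_1}+m_{x_2}$, and $w_{x_1x_2}>0$ because $w\colon E\to\R_+=(0,\infty)$), so you obtain
\[
\mu_1(\Omega)\;\le\; 1-\frac{2w_{x_1x_2}}{m_{x_1}+m_{x_2}}\;<\;1 .
\]
Comparing the two: your argument is more elementary and self-contained, needing neither monotonicity nor the existence/positivity of a first eigenfunction (a Perron--Frobenius type fact), and it yields an explicit spectral gap below $1$, which is strictly more information than the paper's bare strict inequality; the paper's argument is shorter and localizes the estimate at the single vertex $x_1$, but its strictness is qualitative rather than quantitative.
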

\begin{proof} By the monotonicity, $\mu_1(\Omega)\leq \mu_1(\{x_1,x_2\}).$ Let $f:=1_{\{x_1\}}.$ Since the first eigenfunction is positive on $\{x_1,x_2\},$ $f$ is not the first eigenfunction of the Laplacian on $\{x_1,x_2\}$ with Dirichlet boundary condition. This yields that
$$\mu_1(\{x_1,x_2\})<\frac{\frac12\sum_{x,y}w_{xy}|\nabla_{xy}f|^2}{\sum_{x}f^2(x)m_x}=\frac{\sum_{y}w_{x_1y}}{m_{x_1}}\leq 1.$$ This proves the proposition. 
\end{proof}



Let $\dist$ be an intrinsic metric.
We denote by $$\mathrm{Lip}_\dist h:=\sup_{x,y\in V:x\neq y}\frac{|h(x)-h(y)|}{\dist(x,y)}$$ the Lipschitz constant of $h$ over the graph. Hence for any $x,y\in V,$
$$|h(x)-h(y)|\leq (\mathrm{Lip}_\dist h) \dist(x,y).$$
For a one-variable function $\tilde{f}$ on $[0, \infty),$ by setting $f(x)=\tilde{f}(r(x)),$ one obtains that
\begin{equation}\label{eq:lip1}\mathrm{Lip}_\dist(f)\leq \mathrm{Lip}(\tilde{f}),\end{equation} where $\mathrm{Lip}(\tilde{f})$ is the Lipschitz constant of $\tilde{f}.$ In fact, for any $x,y\in V,$
$$|f(x)-f(y)|=|\tilde{f}(r(x))-\tilde{f}(r(y))|\leq \mathrm{Lip}(\tilde{f})|r(x)-r(y)|\leq \mathrm{Lip}(\tilde{f})\dist(x,y).$$

\begin{prop}\label{prop:ba1} Suppose that the combinatorial distance $d$ is an intrinsic metric. For any functions $f,h: V\to \R$ and any finite $\Omega\subset V,$  $$\sum_{x,y\in \Omega}w_{xy}(f^2(x)+f^2(y))e^{h(x)+h(y)}\leq 2e^{\mathrm{Lip}_d h}\sum_{x\in \Omega}f^2(x)e^{2h(x)}m_x.$$
\end{prop}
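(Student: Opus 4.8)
The plan is to exploit the symmetry of the summand together with the fact that $w_{xy}$ vanishes unless $x\sim y$, so that the only pairs contributing to the left-hand side are at combinatorial distance $1$. First I would symmetrize: since $w_{xy}=w_{yx}$ and the factor $(f^2(x)+f^2(y))e^{h(x)+h(y)}$ is symmetric in $x$ and $y$, relabeling the summation indices gives
$$\sum_{x,y\in\Omega}w_{xy}(f^2(x)+f^2(y))e^{h(x)+h(y)}=2\sum_{x,y\in\Omega}w_{xy}f^2(x)e^{h(x)+h(y)}.$$

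Next I would control the cross factor $e^{h(y)}$ by $e^{h(x)}$. Whenever $w_{xy}\neq 0$ we have $x\sim y$, hence $d(x,y)=1$, and the Lipschitz estimate $|h(x)-h(y)|\leq(\mathrm{Lip}_d h)\,d(x,y)$ yields $|h(y)-h(x)|\leq \mathrm{Lip}_d h$. Therefore $e^{h(x)+h(y)}\leq e^{\mathrm{Lip}_d h}\,e^{2h(x)}$ on every contributing pair, and substituting this bound gives
$$2\sum_{x,y\in\Omega}w_{xy}f^2(x)e^{h(x)+h(y)}\leq 2e^{\mathrm{Lip}_d h}\sum_{x\in\Omega}f^2(x)e^{2h(x)}\sum_{y\in\Omega}w_{xy}.$$

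Finally I would invoke the hypothesis that $d$ is an intrinsic metric, which, as recorded in the introduction, is equivalent to $\sum_{y\in V}w_{xy}\leq m_x$ for all $x$. Since $\sum_{y\in\Omega}w_{xy}\leq\sum_{y\in V}w_{xy}\leq m_x$, the inner sum is bounded by $m_x$, and the right-hand side becomes $2e^{\mathrm{Lip}_d h}\sum_{x\in\Omega}f^2(x)e^{2h(x)}m_x$, which is exactly the desired estimate.

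There is no genuine obstacle here; the only points requiring care are the bookkeeping in the symmetrization step and the observation that the jump size of $d$ being equal to $1$ is precisely what converts the Lipschitz bound into the clean, pair-independent factor $e^{\mathrm{Lip}_d h}$. This clean factor is what will make the proposition usable when combined with the intrinsic-metric volume control in the sequel.
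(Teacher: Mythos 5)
Your proof is correct and follows exactly the same route as the paper's: symmetrize to reduce to $2\sum_{x,y\in\Omega}w_{xy}f^2(x)e^{h(x)+h(y)}$, bound $e^{h(y)-h(x)}\leq e^{\mathrm{Lip}_d h}$ on edges (jump size $1$), and absorb $\sum_y w_{xy}\leq m_x$ via the intrinsic-metric condition. The only difference is that you spell out the last two steps, which the paper compresses into a single inequality.
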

\begin{proof} By the symmetry,
\begin{eqnarray*}&&\sum_{x,y\in \Omega}w_{xy}(f^2(x)+f^2(y))e^{h(x)+h(y)}=2\sum_{x,y\in\Omega}w_{xy}f^2(x)e^{h(x)+h(y)}\\
&=&2\sum_{x,y\in \Omega}w_{xy}f^2(x)e^{2h(x)}e^{h(y)-h(x)}
\leq2e^{\mathrm{Lip}_d h}\sum_{x\in \Omega}f^2(x)e^{2h(x)}m_x.\end{eqnarray*}
\end{proof}

\begin{prop}\label{prop:weaker} Condition \eqref{eq:disa1} implies that Condition \eqref{eq:decay1}.
\end{prop}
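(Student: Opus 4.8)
The plan is to reduce everything to the monotone quantity $F(R) := \sum_{\Pi_R} f^2 e^{-2ar}\,m$, which is finite for each $R$ (balls are finite by Assumption~\ref{ass:1}(i)) and nondecreasing in $R$, so that hypothesis \eqref{eq:disa1} reads simply $F(R) = o(R)$. Writing $g(R) := \sum_{\Pi_R^{R+3s}} f^2 e^{-2ar}\,m$ for the annular sum, it suffices to prove $\liminf_{R\to\infty} g(R) = 0$; any sequence $R_i\to\infty$ realizing this liminf then furnishes \eqref{eq:decay1}.

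I would argue by contradiction. Suppose $\liminf_{R\to\infty} g(R) > 0$, so there are $\delta > 0$ and $R_*$ with $g(R) \ge \delta$ for every $R \ge R_*$. I then sample the annuli at the arithmetic progression $R_k := R_* + 3sk$, $k = 0,1,\dots,N-1$, and add up the inequalities $g(R_k) \ge \delta$ to obtain $N\delta \le \sum_{k=0}^{N-1} g(R_k)$.

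The crux is a finite-overlap estimate for the right-hand side. A vertex $x \in \Pi$ contributes to $g(R_k)$ exactly when $R_k \le r(x) \le R_k + 3s$, equivalently $r(x) - 3s \le R_* + 3sk \le r(x)$; since the $R_k$ are spaced $3s$ apart, this confines $k$ to an interval of length $1$, hence to at most two integers. As every such $x$ lies in $\Pi_{R_* + 3sN}$, this yields $\sum_{k=0}^{N-1} g(R_k) \le 2F(R_* + 3sN)$, and therefore $\tfrac{N\delta}{2} \le F(R_* + 3sN)$. Putting $R = R_* + 3sN$ rewrites this as $F(R) \ge \tfrac{\delta}{6s}(R - R_*)$, a linear lower bound that contradicts $F(R) = o(R)$ once $R$ is large, completing the proof.

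The main obstacle is pinning down the overlap multiplicity correctly: it is precisely the factor $2$ (rather than something growing with the jump size $s$) that lets the genuinely linear left-hand growth $N\delta$ confront the sublinear control $o(R)$ on $F$, so the bookkeeping of how many sampled annuli a single vertex can meet is where the argument must be done carefully.
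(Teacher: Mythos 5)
Your proof is correct and is essentially the paper's argument: both rest on the same pigeonhole/averaging observation that the sum of $\sim R/(Cs)$ annular sums inside $\Pi_R$ is controlled (up to a bounded overlap constant) by $\sum_{\Pi_R}f^2e^{-2ar}m=o(R)$, forcing some annular sum to be small. The only differences are cosmetic: the paper sums disjoint annuli at spacing $4s$ and concludes directly, while you use spacing $3s$ with overlap multiplicity $2$ and phrase it as a contradiction.
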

\begin{proof}
 By \eqref{eq:disa1}, for some $i_0\in \N$ and any large $R,$
$$\frac{1}{R}\sum_{i=i_0}^{[\frac{R}{4s}]-1}\sum_{x\in \Pi_{4is}^{4is+3s}}f^2 e^{-2ar}m\leq \frac{1}{R}\sum_{\Pi_R}f^2 e^{-2ar}m\to 0,\quad R\to\infty.$$
Therefore, there exist $R_i\to \infty, \ i\to\infty,$ such that
$$\sum_{x\in \Pi_{R_i}^{R_i+3s}}f^2 e^{-2ar}m\to 0, \quad i\to \infty.$$ This proves the proposition.
\end{proof}

\section{Proof of the main theorem}\label{sec:proof}
Let $\Pi$ be an end of $G.$ Let $R_0\geq \max_{x\in \partial \Pi}r(x)$ and $a>0.$
For any $R,L\in \R_+$ satisfying \begin{equation}\label{eq:basic1}R\geq R_0+3s\quad \mathrm{and}\quad L\geq R+3s,\end{equation} we define two functions on $[0,\infty),$ 
\begin{equation}\label{12-1}
\tilde{\phi}(r)=\left\{\begin{array}{ll}0,& r\leq R_0+s,\\
\frac1s (r-R_0-s),& R_0+s<r\leq R_0+2s,\\
1,& R_0+2s<r\leq L+s,\\
\frac1s (L+2s-r),& L+s< r\leq L+2s,\\
0,&  r> L+2s,
\end{array}\right.\end{equation}

\begin{equation}\label{12-2}
\tilde{h}(r)=\left\{\begin{array}{ll} ar,&r\leq R-s,\\
a(R-s),& R-s< r\leq R+4s,\\
-a r+2aR+3as,&  r> R+4s.
\end{array}\right.\end{equation}

 Then we define the test function on $V$ as
\begin{equation}\label{def:ph} \phi \, e^h\quad\mathrm{for}\ \phi(x)=\tilde{\phi}(r(x)),\ h(x)=\tilde{h}(r(x)).\end{equation}
By \eqref{eq:lip1},
\begin{equation}\label{eq:lipcon1}\mathrm{Lip}_\dist(\phi)\leq \frac1s,\ \mathrm{Lip}_\dist(h)\leq a.
\end{equation} The same construction of cut-off functions works for $\dist=d.$


\begin{prop}\label{prop:ba2} Let $\dist$ be a metric on $V.$ For any $x\sim y,$
\begin{equation}\label{eq:bb1}|\nabla_{xy}e^h|^2\leq \frac{(e^{a s}-1)^2}{s^2}\min\{e^{2h(x),2h(y)}\}\dist^2(x,y).\end{equation}

If $\rho=d,$ then for any $x\sim y,$
\begin{equation}\label{eq:kes1}|\nabla_{xy}e^h|^2\leq q(a) (e^{2h(x)}+e^{2h(y)}),\end{equation} where $q(a)=\frac{(e^{a}-1)^2}{e^{2a}+1}.$

\end{prop}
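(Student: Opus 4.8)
The plan is to reduce both inequalities to a single-variable monotonicity statement. Writing $\nabla_{xy}e^h=e^{h(y)}-e^{h(x)}$, I observe that the quantity $|\nabla_{xy}e^h|^2$, the factor $\min\{e^{2h(x)},e^{2h(y)}\}$, and $\dist^2(x,y)$ are all symmetric in $x,y$, so I may assume without loss of generality that $h(x)\leq h(y)$. Setting $t:=h(y)-h(x)\geq 0$ and factoring out $e^{2h(x)}=\min\{e^{2h(x)},e^{2h(y)}\}$, the first estimate \eqref{eq:bb1} becomes the scalar inequality $(e^t-1)^2\leq \frac{(e^{as}-1)^2}{s^2}\,\dist^2(x,y)$. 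The two inputs are that $0\leq t\leq a\,\dist(x,y)$, which is exactly the Lipschitz bound \eqref{eq:lipcon1}, and that $\dist(x,y)\leq s$ for $x\sim y$, which is the definition of the jump size.

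To close the first estimate I would establish that $u\mapsto \frac{e^{au}-1}{u}$ is nondecreasing on $(0,\infty)$; since $a>0$, this is immediate from the power series $\frac{e^{au}-1}{u}=\sum_{k\geq 1}\frac{a^k u^{k-1}}{k!}$, each summand of which is nondecreasing in $u$. Because $t\leq a\,\dist(x,y)$ and $\lambda\mapsto e^\lambda-1$ is increasing, I get $e^t-1\leq e^{a\dist(x,y)}-1$, hence $\frac{e^t-1}{\dist(x,y)}\leq \frac{e^{a\dist(x,y)}-1}{\dist(x,y)}\leq \frac{e^{as}-1}{s}$, where the last inequality applies the monotonicity fact with $\dist(x,y)\leq s$. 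Squaring yields \eqref{eq:bb1}.

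For the second inequality, when $\dist=d$ the jump size is $s=1$ and neighbours satisfy $\dist(x,y)=1$, so now $0\leq t\leq a$. After the same factoring of $e^{2h(x)}$, inequality \eqref{eq:kes1} reduces to $(e^t-1)^2\leq q(a)\,(1+e^{2t})$ with $q(a)=\frac{(e^a-1)^2}{1+e^{2a}}$, that is, to $\psi(t)\leq \psi(a)$ for $\psi(t):=\frac{(e^t-1)^2}{1+e^{2t}}$ on $[0,a]$. Substituting $\xi=e^t\geq 1$, this is exactly the claim that $\Phi(\xi):=\frac{(\xi-1)^2}{1+\xi^2}$ is nondecreasing on $[1,\infty)$, which follows from the direct computation $\Phi'(\xi)=\frac{2(\xi^2-1)}{(1+\xi^2)^2}\geq 0$.

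Both arguments are elementary, so I do not anticipate a serious obstacle. The only points requiring care are the symmetry reduction, where one must factor $e^{2h}$ from the correct vertex so that it matches $\min\{e^{2h(x)},e^{2h(y)}\}$, and the two monotonicity facts. Of these, verifying that $\Phi$ is increasing through its derivative is the most computational step, but it is routine, and the power-series monotonicity of $\frac{e^{au}-1}{u}$ is the conceptual heart that lets the jump-size bound $\dist(x,y)\leq s$ be used sharply.
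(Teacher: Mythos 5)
Your proof is correct and follows essentially the same route as the paper: reduce by symmetry to $h(x)\leq h(y)$, factor out $e^{2h(x)}=\min\{e^{2h(x)},e^{2h(y)}\}$, and combine the Lipschitz bound with the jump-size bound via the monotonicity of $u\mapsto (e^{au}-1)/u$ and of $\xi\mapsto (\xi-1)^2/(1+\xi^2)$ --- the same two monotonicity facts the paper invokes (and merely asserts), which you additionally verify. The only cosmetic omission is the degenerate case $\dist(x,y)=0$ (possible for a pseudo-metric), which the paper treats explicitly and which your division by $\dist(x,y)$ formally excludes; but there the Lipschitz bound forces $t=0$, so both sides of \eqref{eq:bb1} vanish and nothing is lost.
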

\begin{rem} The second assertion was proved by Fujiwara \cite{Fujiwaragrowth}.
\end{rem}

\begin{proof} Without loss of generality, we may assume that $\dist(x,y)>0.$ Otherwise $r(x)=r(y),$ the results are trivial. We further assume that $h(x)\leq h(y).$

 For the first assertion, since $|h(y)-h(x)|\leq a\dist(x,y)$ and $x\sim y,$ 
\begin{eqnarray*} \frac{(e^{h(y)-h(x)}-1)^2}{\dist^2(x,y)}&\leq&\frac{(e^{a\dist(x,y)}-1)^2}{\dist^2(x,y)}\leq \frac{(e^{as}-1)^2}{s^2},
\end{eqnarray*} where we used the monotonicity of the function $t\mapsto \frac{(e^{at}-1)^2}{t^2}$ in the last inequality. This proves the first assertion.

For the second assertion, since $|h(y)-h(x)|\leq a$ and $x\sim y,$ 
\begin{eqnarray*} \frac{(e^{h(y)-h(x)}-1)^2}{e^{2(h(y)-h(x))}+1}&\leq&\frac{(e^{a}-1)^2}{e^{2a}+1},
\end{eqnarray*} where we used the monotonicity of the function $t\mapsto \frac{(e^{t}-1)^2}{e^{2t}+1}$ in the last inequality. This proves the second assertion.
\end{proof}




Now we are ready to prove Theorem~\ref{thm:main1}.
\begin{proof}[Proof of Theorem~\ref{thm:main1}]
We first prove the result for $\rho\neq d.$
Let $R_0\geq \max_{x\in \partial \Pi}r(x),$ and $R,L\in \R_+$. Let  the functions $\phi$ and $h$ be defined as in \eqref{12-1} and ~\eqref{12-2}. For our purposes, we re-define $\phi$ as $\phi 1_{\Pi}$ such that $\phi|_{V\setminus \Pi}=0.$ Let $\mu_1$ be  the bottom of the spectrum on $\Pi$.
Then by Proposition \ref{prop:form1}
\begin{eqnarray}&&\mu_1\sum f^2 \phi^2 e^{2h}m\leq \frac{1}{2}\sum_{x,y\in V}w_{xy}|\nabla_{xy} (\phi e^hf)|^2\nonumber\\
&=&-\sum f \Delta f \phi^2 e^{2h}m+\sum f^2\Gamma(\phi e^h)m-\frac14\sum_{x,y}w_{xy}|\nabla_{xy}f|^2|\nabla_{xy}(\phi e^h)|^2\nonumber\\
&=&-\sum f \Delta f \phi^2 e^{2h}m+\frac12\sum_{x,y}w_{xy} f^2(x)|\nabla_{xy}\phi|^2e^{2h(y)}\label{eq:t21}\\&&+\sum_{x,y}w_{xy}f^2(x) e^{h(y)}\phi(x)\nabla_{xy}\phi\nabla_{xy}e^h\nonumber\\
&&+\frac12\sum_{x,y}w_{xy} f^2(x)|\nabla_{xy}e^h|^2\phi^2(x)-\frac14\sum_{x,y}w_{xy}|\nabla_{xy}f|^2|\nabla_{xy}(\phi e^h)|^2\nonumber\\
&=:&I+II+III+IV+V,\nonumber\end{eqnarray}
where we have used \eqref{eq:leibniz}.

We estimate the right hand side of the inequality term by term. By \eqref{eq:lip1},
for any $x,y\in V,$
$$|\phi(y)-\phi(x)|\leq \frac1s\dist(x,y),\quad |h(y)-h(x)|\leq a\dist(x,y).$$

Note that for any $x\not\in \Pi_{R_0}^{R_0+3s}\cup \Pi_{L}^{L+3s}$ and $y\sim x,$ we have $\nabla_{xy}\phi=0.$ For the term $II,$ 
\begin{eqnarray} II&= &\frac{1}{2}\sum_{x,y}w_{xy} f^2(x)|\nabla_{xy}\phi|^2e^{2h(x)}e^{2h(y)-2h(x)}
\leq \frac{e^{2as}}{2}\sum_{x,y}w_{xy} f^2(x)|\nabla_{xy}\phi|^2e^{2h(x)}\nonumber\\
&=&\frac{e^{2as}}{2}\Bigg(\sum_{x\in \Pi_{R_0}^{R_0+3s}}+\sum_{x\in \Pi_{L}^{L+3s}}\Bigg) f^2(x)e^{2h(x)}\sum_yw_{xy}|\nabla_{xy}\phi|^2\nonumber\\
&\leq&\frac{e^{2as}}{2s^2}\Bigg(\sum_{x\in \Pi_{R_0}^{R_0+3s}}+\sum_{x\in \Pi_{L}^{L+3s}}\Bigg) f^2(x)e^{2h(x)}\sum_yw_{xy}\dist^2(x,y)\nonumber\\
&\leq&\frac{e^{2as}}{2s^2}\Bigg(\sum_{x\in \Pi_{R_0}^{R_0+3s}}+\sum_{x\in \Pi_{L}^{L+3s}}\Bigg) f^2(x)e^{2h(x)}m_x.\label{eq:t22}
\end{eqnarray}

For the term $III,$ by \eqref{eq:lipcon1} and \eqref{eq:bb1},
\begin{eqnarray}III&\leq&e^{as}\sum_{x,y}w_{xy}f^2(x) e^{h(x)}|\nabla_{xy}\phi||\nabla_{xy}e^h|\nonumber\\
&\leq&e^{as}\Bigg(\sum_{x\in \Pi_{R_0}^{R_0+3s}}+\sum_{x\in \Pi_{L}^{L+3s}}\Bigg)f^2(x) e^{h(x)}\sum_{y}w_{xy}|\nabla_{xy}\phi||\nabla_{xy}e^h|\nonumber\\
&\leq&\frac{e^{as}(e^{as}-1)}{s^2}\Bigg(\sum_{x\in \Pi_{R_0}^{R_0+3s}}+\sum_{x\in \Pi_{L}^{L+3s}}\Bigg)f^2(x) e^{2h(x)}\sum_{y}w_{xy}\dist^2(x,y)\nonumber\\
&\leq&\frac{e^{as}(e^{as}-1)}{s^2}\Bigg(\sum_{x\in \Pi_{R_0}^{R_0+3s}}+\sum_{x\in \Pi_{L}^{L+3s}}\Bigg)f^2(x)e^{2h(x)}m_x\label{eq:t23}.
\end{eqnarray}


Note that $h(x)$ is constant on $\Pi_{R-s}^{R+4s}.$ Hence for any $x\sim y,$ if $x\in \Pi_R^{R+3s}$ or $y\in \Pi_R^{R+3s},$ then $\nabla_{xy}h=0,$ which yields $\nabla_{xy}e^h=0.$
 By \eqref{eq:bb1},
\begin{eqnarray*}
IV&=&\frac12\sum_{x,y\in V:x,y\not\in \Pi_{R}^{R+3s}} w_{xy}|\nabla_{xy}e^h|^2f^2(x)\phi^2(x)\nonumber\\
&\leq&\frac{(e^{as}-1)^2}{2s^2}\sum_{x\not\in \Pi_{R}^{R+3s}}f^2(x)\phi^2(x) e^{2h(x)}\sum_{y}w_{xy}\dist^2(x,y)\nonumber\\
&\leq&\frac{(e^{as}-1)^2}{2s^2}\sum_{x\not\in \Pi_{R}^{R+3s}}f^2(x)\phi^2(x) e^{2h(x)}m_x.\nonumber
\end{eqnarray*}

Noting that the fifth term $V\leq 0,$ combining all estimates above and $\Delta f\geq-\mu f$, we obtain that
\begin{eqnarray*}
\mu_1\sum f^2 \phi^2 e^{2h}m&\leq&\mu\sum f^2\phi^2 e^{2h}m+C(a,s)\Bigg(\sum_{x\in \Pi_{R_0}^{R_0+3s}}+\sum_{x\in \Pi_{L}^{L+3s}}\Bigg)f^2 e^{2h}m\\
&&+p(a,s)\sum_{x\not\in \Pi_{R}^{R+3s}}f^2 \phi^2 e^{2h}m,\\
\end{eqnarray*} where $p(a,s)=\frac{(e^{as}-1)^2}{2s^2},\ C(a,s)=\frac{e^{2as}}{2s^2}+\frac{e^{as}(e^{as}-1)}{s^2}.$
 By choosing $a$ such that $p(a,s)=\mu_1-\mu$ and cancelling terms on both sides, we get that
\begin{eqnarray}\label{eq:t11}
&&(\mu_1-\mu)e^{-8as}\sum_{x\in \Pi_{R}^{R+3s}} f^2 e^{2ar}m\nonumber\\&\leq&(\mu_1-\mu)\sum_{x\in \Pi_{R}^{R+3s}} f^2 \phi^2 e^{2h}m\leq C\Bigg(\sum_{x\in \Pi_{R_0}^{R_0+3s}}+\sum_{x\in \Pi_{L}^{L+3s}}\Bigg)f^2 e^{2h}m\nonumber\\
&=&C\sum_{x\in \Pi_{R_0}^{R_0+3s}}f^2 e^{2ar}m+Ce^{4aR+6as}\sum_{x\in \Pi_{L}^{L+3s}}f^2 e^{-2ar}m
\end{eqnarray} 
By the assumption \eqref{eq:decay1}, choosing $L=R_i$ in \eqref{eq:t11} and letting $i\to \infty,$ we have for $R\geq R_0+3s,$
$$\sum_{x\in \Pi_{R}^{R+3s}} f^2 m\leq \frac{Ce^{8as}}{\mu_1-\mu }e^{-2aR}\sum_{x\in \Pi_{R_0}^{R_0+3s}}f^2 e^{2ar}m\leq \frac{7e^{10as}}{s^2(\mu_1-\mu)}e^{-2aR}\sum_{x\in \Pi_{R_0}^{R_0+3s}}f^2 e^{2ar}m.$$ This proves the result for $\rho\neq d.$

Now we consider the case that $\rho=d.$ Note that $s=1.$ Following the same arguments as above, we obtain that \eqref{eq:t21},  \eqref{eq:t22}, and \eqref{eq:t23}. For the term $IV$ and $V$ therein, we give refined estimates as follows.
By the symmetrization,
\begin{eqnarray}
&&IV+V\nonumber\\&=&\frac14\sum_{x,y}w_{xy}(|\nabla_{xy}e^h|^2(f^2(x)\phi^2(x)+f^2(y)\phi^2(y))-|\nabla_{xy}f|^2|\nabla_{xy}(\phi e^h)|^2)\nonumber\\
&=&\frac12\sum_{x,y}w_{xy}|\nabla_{xy}e^h|^2\phi^2(x)f(x)f(y)+\frac14\sum_{x,y}w_{xy}|\nabla_{xy}e^h|^2f^2(y)\nabla_{xy}(\phi^2)\nonumber\\
&&{+}\frac14\sum_{x,y}w_{xy}|\nabla_{xy}f|^2(|\nabla_{xy}e^h|^2\phi^2(x)-|\nabla_{xy}(\phi e^h)|^2)\nonumber\\
&=:&D_1+A_1+B_1\label{eq:first1}.
\end{eqnarray}

For the term $A_1,$ 
\begin{eqnarray*}A_1&\leq& \frac12\sum_{x,y}w_{xy}|\nabla_{xy}e^h|^2f^2(y)|\nabla_{xy}\phi|\leq \frac12(e^a-1)^2\sum_{x,y}w_{xy}e^{2h(y)}f^2(y)|\nabla_{xy}\phi|\\
&\leq&\frac12(e^a-1)^2\Bigg(\sum_{y\in \Pi_{R_0}^{R_0+3}}+\sum_{y\in \Pi_{L}^{L+3}}\Bigg)e^{2h(y)}f^2(y)m_y
\end{eqnarray*}

For the term $B_1,$ by \eqref{eq:leibniz},
\begin{eqnarray*}B_1&=& \frac14\sum_{x,y}w_{xy}|\nabla_{xy}f|^2\nabla_{xy}\phi(2e^{h(x)+h(y)}\phi(x)-(\phi(x)+\phi(y))e^{2h(y)})\\
&\leq&(1+e^a)\sum_{x,y}w_{xy}(f^2(x)+f^2(y))|\nabla_{xy}\phi| e^{h(x)+h(y)}\\
&=&(1+e^a)\Bigg(\sum_{x,y\in \Pi_{R_0}^{R_0+3}}+\sum_{x,y\in \Pi_{L}^{L+3}}\Bigg)w_{xy}(f^2(x)+f^2(y)) e^{h(x)+h(y)}\\
&\leq &2(1+e^a)e^a\Bigg(\sum_{x\in \Pi_{R_0}^{R_0+3}}+\sum_{x\in \Pi_{L}^{L+3}}\Bigg)f^2(x) e^{2h(x)}m_x,
\end{eqnarray*} where the last inequality follows from Proposition~\ref{prop:ba1}.

For the term $D_1,$ by Proposition~\ref{prop:ba2},
\begin{eqnarray*}&&D_1\\&=&\frac12\sum_{x,y\in V, x,y\not\in \Pi_{R}^{R+3}}w_{xy}|\nabla_{xy}e^h|^2\phi^2(x)f(x)f(y)\\
&\leq& \frac{q}{2}\sum_{x,y\not\in \Pi_{R}^{R+3}}w_{xy}\phi^2(x)f(x)f(y)(e^{2h(x)}+e^{2h(y)})\\
&\leq &\frac{q}{2}\sum_{x,y\not\in \Pi_{R}^{R+3}}w_{xy}\phi^2(x)f(x)f(y)e^{2h(x)}+\frac{q}{2}\sum_{x,y\not\in \Pi_{R}^{R+3}}w_{xy}\phi^2(y)f(x)f(y)e^{2h(y)}\\
&&+\frac{q}{2}\sum_{x,y\not\in \Pi_{R}^{R+3}}w_{xy}\nabla_{yx}(\phi^2)f(x)f(y)e^{2h(y)}
\\
&=&q\sum_{x,y\not\in \Pi_{R}^{R+3}}w_{xy}\phi^2(x)f(x)f(y)e^{2h(x)}+\frac{q}{2}\sum_{x,y\not\in \Pi_{R}^{R+3}}w_{xy}\nabla_{yx}(\phi^2)f(x)f(y)e^{2h(y)}\\
&=:&A_2+B_2,
\end{eqnarray*} where $q=q(a).$

For the term $A_2,$
\begin{eqnarray*}A_2&=&q(a)\sum_{x\not\in \Pi_{R}^{R+3}}\phi^2(x)f(x)e^{2h(x)}\sum_{y\not\in \Pi_{R}^{R+3}} w_{xy}f(y)\\
&\leq&q(a)\sum_{x\not\in \Pi_{R}^{R+3}}\phi^2(x)f(x)e^{2h(x)}\sum_{y} w_{xy}f(y)\\
&\leq&q(a)\sum_{x\not\in \Pi_{R}^{R+3}}\phi^2(x)f(x)e^{2h(x)}m_x(\Delta f(x)+f(x)).
\end{eqnarray*}

For the term $B_2,$ by Proposition~\ref{prop:ba1},
\begin{eqnarray*}
B_2&\leq&
q(a)\sum_{x,y}w_{xy}|\nabla_{xy}\phi|f(x)f(y)e^{2h(y)}\\
&\leq& \frac{q(a)}{2}e^a\sum_{x,y}w_{xy}|\nabla_{xy}\phi|(f^2(x)+f^2(y))e^{h(x)+h(y)}\\
&\leq&\frac{q(a)}{2}e^a\Bigg(\sum_{x,y\in \Pi_{R_0}^{R_0+3}}+\sum_{x,y\in \Pi_{L}^{L+3}}\Bigg)w_{xy}(f^2(x)+f^2(y))e^{h(x)+h(y)}\\
&\leq&q(a)e^{2a}\Bigg(\sum_{x\in \Pi_{R_0}^{R_0+3}}+\sum_{x\in \Pi_{L}^{L+3}}\Bigg)f^2(x) e^{2h(x)}m_x.
\end{eqnarray*}

Choose $a$ such that $q(a)=\frac{\mu_1-\mu}{1-\mu}.$ By Proposition~\ref{prop:less1}, $\mu_1<1,$ which yields that $q(a)<1.$
Combining all the estimates above with \eqref{eq:t21},  \eqref{eq:t22} and \eqref{eq:t23}, by $\Delta f\geq -\mu f$, we get
\begin{eqnarray*}
&&\mu_1\sum f^2 \phi^2 e^{2h}m\\&\leq&-\sum f \Delta f \phi^2 e^{2h}m+C(a)\left(\sum_{x\in \Pi_{R_0}^{R_0+3}}+\sum_{x\in \Pi_{L}^{L+3}}\right)f^2 e^{2h}m\\
&&+q(a)\sum_{x\not\in \Pi_{R}^{R+3}}\phi^2(x)f(x)e^{2h(x)}m_x(\Delta f(x)+f(x))\\
&=&-(1-q(a))\sum_{x\not\in \Pi_{R}^{R+3}}f \Delta f \phi^2 e^{2h}m- \sum_{x\in \Pi_{R}^{R+3}}f \Delta f \phi^2 e^{2h}m+q(a)\sum_{x\not\in \Pi_{R}^{R+3}}f^2 \phi^2 e^{2h}m\\
&&+C(a)\Bigg(\sum_{x\in \Pi_{R_0}^{R_0+3}}+\sum_{x\in \Pi_{L}^{L+3}}\Bigg)f^2 e^{2h}m,\\
&\leq&((1-q(a))\mu+q(a))\sum_{x\not\in \Pi_{R}^{R+3}}f^2 \phi^2 e^{2h}m+\mu\sum_{x\in \Pi_{R}^{R+3}}f^2 \phi^2 e^{2h}m\\
&&+C(a)\Bigg(\sum_{x\in \Pi_{R_0}^{R_0+3}}+\sum_{x\in \Pi_{L}^{L+3}}\Bigg)f^2 e^{2h}m
\end{eqnarray*} where, for $q(a)<1,$ $$C(a)=\frac{e^{2a}}{2}+e^a(e^a-1)+\frac12(e^a-1)^2+2e^a(1+e^a)+q(a)e^{2a}\leq 7e^{2a}.$$
Noting that $(1-q(a))\mu+q(a)=\mu_1$ and cancelling terms on both sides, 
we obtain that
\begin{eqnarray}
&&(\mu_1-\mu)e^{2a R-2as}\sum_{x\in \Pi_{R}^{R+3}} f^2m\nonumber\\ &\leq&(\mu_1-\mu)\sum_{x\in \Pi_{R}^{R+3}} f^2 \phi^2 e^{2h}m\nonumber\\&\leq&C(a)\Bigg(\sum_{x\in \Pi_{R_0}^{R_0+3}}+\sum_{x\in \Pi_{L}^{L+3}}\Bigg)f^2 e^{2h}m\nonumber\\
&=&C(a)\sum_{x\in \Pi_{R_0}^{R_0+3}}f^2 e^{2ar}m+C(a)e^{4aR+6a}\sum_{x\in \Pi_{L}^{L+3}}f^2 e^{-2ar}m.\label{eq:mainest11}
\end{eqnarray}

By the assumption \eqref{eq:decay1}, choosing $L=R_i$ in \eqref{eq:mainest11} and letting $i\to \infty,$ we have for $R\geq R_0+3,$
$$\sum_{x\in \Pi_{R}^{R+3}} f^2m \leq \frac{C(a) e^{2a}}{\mu_1-\mu }e^{-2aR}\sum_{x\in \Pi_{R_0}^{R_0+3}}f^2 e^{2ar}m\leq \frac{7e^{10a}}{\mu_1-\mu }e^{-2aR}\sum_{x\in \Pi_{R_0}^{R_0+3}}f^2 e^{2ar}m.$$ This proves the result for $\rho=d.$

Hence we prove the theorem.

\end{proof}

\section{Applications}\label{sec:app}
In this section, we investigate various applications of the main result.

\begin{lemma}\label{lem:keyest} Let $G=(V,E,m,w)$ be an infinite weighted graph, and $\dist$ be an intrinsic metric satisfying Assumption~\ref{ass:1}. Let $\Pi$ be an end such that $\mu_1(\Pi)>0$ and $a=a_0(\mu_1(\Pi)),$ {where $a_0(\mu_1(\Pi))$ is defined in Definition~\ref{def:d1}. } Let $R_0\geq \sup_{x\in \partial \Pi}r(x).$
 For any $R\geq R_0,$ let $f_R$ satisfy
\[\left\{\begin{array}{ll} \Delta f_R(x)=0,& x\in \Pi_R,\\
f_R\big|_{\Pi\setminus \Pi_R}=0.& 
\end{array}\right.\]  If there exist $R_i\to \infty,$ such that $f=\lim_{i\to\infty}f_{R_i}$ on $\Pi.$ Then for $R\geq R_0+3s,$
$$\sum_{\Pi_{R}^{R+3s}}f^2m\leq C e^{-2aR} \sum_{\Pi_{R_0}^{R_0+3s}}f^2e^{2ar}m.$$ 
\end{lemma}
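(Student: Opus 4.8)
The plan is to deduce Lemma~\ref{lem:keyest} as an application of the main estimate, Theorem~\ref{thm:main1}, applied to the limiting function $f.$ First I would record the elementary structural facts: each $f_R$ is harmonic on $\Pi_R$ and vanishes off $\Pi_R,$ and on an end these discrete harmonic functions with nonnegative boundary data obey a maximum/minimum principle, so each $f_R$ is nonnegative if the boundary data is, and the limit $f=\lim_{i\to\infty}f_{R_i}$ inherits nonnegativity. The limit $f$ is then harmonic on all of $\Pi$ (since $\Delta f_{R_i}=0$ on $\Pi_{R_i}$ and $R_i\to\infty,$ locally the defining equation passes to the limit), hence in particular $\Delta f=0\geq -\mu f$ for $\mu=0.$ Thus with $\mu=0$ and $a=a_0(\mu_1(\Pi))$ the function $f$ is exactly a nonnegative generalized subharmonic function of the type Theorem~\ref{thm:main1} treats, and the constant $C=\frac{7e^{10as}}{s^2\mu_1}$ is precisely the constant appearing there (with $\mu=0$).

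The remaining content is to verify the decay hypothesis \eqref{eq:decay1} for this particular $f,$ namely that there exist $R_j\to\infty$ with $\sum_{\Pi_{R_j}^{R_j+3s}}f^2e^{-2ar}m\to 0.$ This is where the construction $f=\lim f_{R_i}$ must be used, since a priori a harmonic function need not decay. The key point is that each $f_{R_i}$ is supported inside $\Pi_{R_i},$ so for radii $R$ slightly below $R_i$ the mass $\sum_{\Pi_{R}^{R+3s}}f_{R_i}^2e^{-2ar}m$ is controlled by the behavior near the boundary sphere where $f_{R_i}$ is forced to vanish. I would make this quantitative by combining the Caccioppoli-type inequality \eqref{eq:t11} (or \eqref{eq:mainest11} in the combinatorial case), which bounds the annular mass at radius $R$ by the boundary term at $R_0$ plus a term $Ce^{4aR}\sum_{\Pi_{L}^{L+3s}}f^2e^{-2ar}m$ evaluated at the outer cutoff $L,$ with the vanishing of $f_{R_i}$ past $R_i.$ Passing to the limit along $R_i\to\infty$ the outer term drops out, giving the needed decaying subsequence; alternatively one shows directly that $\sum_{\Pi}f^2e^{-2ar}m<\infty$ so that the tails $\sum_{\Pi_{R_j}^{R_j+3s}}f^2e^{-2ar}m$ must go to zero along some sequence.

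I expect the main obstacle to be precisely the verification of the growth/decay condition \eqref{eq:decay1} for the limit $f$: one must ensure the convergence $f_{R_i}\to f$ is compatible with the decay estimate, i.e. that the outer contribution localized at $\Pi_{L}^{L+3s}$ genuinely vanishes in the limit rather than being absorbed into $f.$ The cleanest route is to apply Theorem~\ref{thm:main1} to each $f_{R_i}$ itself — for $f_{R_i}$ the decay condition is trivially satisfied because $f_{R_i}$ has finite support, so one obtains a uniform bound $\sum_{\Pi_{R}^{R+3s}}f_{R_i}^2m\leq Ce^{-2aR}\sum_{\Pi_{R_0}^{R_0+3s}}f_{R_i}^2e^{2ar}m$ with a constant independent of $i$ — and then let $i\to\infty,$ using the pointwise convergence $f_{R_i}\to f$ together with Fatou's lemma on the left side and boundedness of $\Pi_{R_0}^{R_0+3s}$ (a finite set, so the right side converges) to obtain the stated inequality for $f.$ This avoids having to verify \eqref{eq:decay1} for $f$ directly and is the argument I would ultimately present.
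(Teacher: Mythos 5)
Your final argument is the paper's own proof: apply Theorem~\ref{thm:main1} with $\mu=0$ to each $f_{R_i}$, for which the decay hypothesis \eqref{eq:decay1} is vacuous because $f_{R_i}$ vanishes outside the finite set $\Pi_{R_i}$ (balls are finite by Assumption~\ref{ass:1}), obtain the estimate with a constant independent of $i$, and pass to the limit using pointwise convergence and finiteness of the annuli. One caveat, though: Theorem~\ref{thm:main1} requires a \emph{nonnegative} function, and you secure nonnegativity of $f_{R_i}$ only under the extra assumption that the boundary data on $\partial\Pi$ is nonnegative, which the lemma does not impose --- indeed, in Corollary~\ref{coro:app4} the lemma is invoked with signed boundary data $u$. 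The paper closes this hole with a one-line observation that your write-up misses: $|f_{R_i}|$ is subharmonic on all of $\Pi$ (on $\Pi_{R_i}$ it is the absolute value of a harmonic function, and where $f_{R_i}=0$ one has $\Delta|f_{R_i}|\geq 0$ trivially), so Theorem~\ref{thm:main1} applies to $|f_{R_i}|$ with $\mu=0$, and since $f_{R_i}^2=|f_{R_i}|^2$ the resulting inequality is exactly the one you want. With that substitution your argument is complete and identical in substance to the paper's; your first two paragraphs, which try to verify \eqref{eq:decay1} for the limit $f$ directly, are unnecessary, as you yourself conclude.
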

\begin{proof} 
Note that $|f_{R}|$ is a subharmonic function on $\Pi,$ which satisfies the assumptions of Theorem~\ref{thm:main1}. Hence for any $R\geq R_0+3s,$ 
$$\sum_{\Pi_{R}^{R+3s}}f_{R_i}^2m\leq C e^{-2aR} \sum_{\Pi_{R_0}^{R_0+3s}}f_{R_i}^2e^{2ar}m.$$ 
By passing to the limit, $i\to \infty,$ we prove the lemma.
\end{proof}

Now we are ready to prove Theorem~\ref{thm:app1}.
\begin{proof}[Proof of Theorem~\ref{thm:app1}]
For any $\alpha\geq 0,$ we set $$g_{\alpha,R}(x,y)=\int_0^\infty e^{-\alpha t}p_{t,R}(x,y)dt,$$ where $p_{t,R}(x,y)$ denotes the heat kernel on $B_R(x_0)$ with Dirichlet boundary condition, which is zero if $x\in V\setminus B_R(x_0)$ or $y\in V\setminus B_R(x_0).$ Note that $g_{\alpha,R}(x,y)$ is non-decreasing in $R,$ and \begin{equation}\label{eq:ddq}g_\alpha(x_0,x)=\lim_{R\to \infty}g_{\alpha,R}(x_0,x),\quad \forall x\in V.\end{equation}
For $\alpha>0,$ $g_\alpha(x_0,x)<\infty$ for any $x\in V.$ For $\alpha=0,$ since $\mu_1(G)>0,$ the graph is non-parabolic, i.e. the minimal Green's function $g_0(x_0,x)<\infty$ for all $x\in V.$ 

Let $\Omega=\{x_0\}.$ By the local finiteness, there are only finitely many ends $\{\Pi_j\}_{j=1}^J$ with respect to $\Omega.$ Noting that for any $1\leq j\leq J,$ $\mu_1(\Pi_j)\geq \mu_1(G)>0,$ by \eqref{eq:ddq} and the same argument as in Lemma~\ref{lem:keyest}, we have the estimate
$$\sum_{{(\Pi_j)}_{R}^{R+3s}}g_{\alpha}^2(x_0,\cdot)m\leq C e^{-2aR},\quad\forall R\geq 4s.$$ The summation over $j$ yields the result for $R\geq 4s,$ since the estimate on finite connected components of $V\setminus \Omega$ is trivial. The result for $R<4s$ is obvious. This proves the theorem.
\end{proof}

For any $R>0,$ let $f_R$ satisfy 
\begin{equation}\label{eq:frc}\left\{\begin{array}{ll} \Delta f_{R}(x)=0,& x\in \Pi_R\\
f_R\big|_{\partial \Pi}=1,&\\
f_R\big|_{\Pi\setminus \Pi_R}=0.&\\
\end{array}\right.\end{equation}  By the monotonicity of $f_R,$ we define \begin{equation}\label{eq:para1}f=\lim_{R\to\infty} f_R.\end{equation} By Definition~\ref{def:para1} and the maximum principle, if $\Pi$ is parabolic, then $f\equiv 1$ on $\Pi.$ Otherwise, $f$ is a barrier function on $\Pi.$ In the latter case, $f$ is the minimal barrier function on $\Pi,$ and satisfies $\liminf_{x\to \Pi(\infty)}f(x)=0.$ Hence, we have the following proposition.
\begin{prop}\label{prop:para}$\Pi$ is a parabolic end if and only if $f\equiv 1$ on $\Pi,$ where $f$ is defined in \eqref{eq:para1}.
\end{prop}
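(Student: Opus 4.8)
The plan is to extract from the construction \eqref{eq:frc}--\eqref{eq:para1} two structural features of $f$ and then play them against Definition~\ref{def:para1}. First I would record that $f$ is well defined: since the boundary data of $f_R$ lie in $[0,1]$, the maximum principle gives $0\le f_R\le 1$, and a comparison of $f_{R_1}$ with $f_{R_2}$ on $\Pi_{R_1}$ (they share the value $1$ on $\partial\Pi$, while $f_{R_1}=0\le f_{R_2}$ on $\Pi\setminus\Pi_{R_1}$) shows that $R\mapsto f_R$ is nondecreasing. Hence the pointwise limit $f$ in \eqref{eq:para1} exists, satisfies $0\le f\le 1$ and $f|_{\partial\Pi}=1$, and is harmonic on $\Pi$, since harmonicity is a local finite averaging identity that passes to the monotone limit.

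The two features I need are: (i) \emph{minimality}, namely $f\le g$ for every nonnegative $g$ on $\overline{\Pi}$ with $\Delta g=0$ on $\Pi$ and $g|_{\partial\Pi}=1$; and (ii) a \emph{maximum principle at infinity}: if $u\ge 0$ is harmonic on $\Pi$, vanishes on $\partial\Pi$, and $\liminf_{x\to\Pi(\infty)}u(x)=0$, then $u\equiv 0$. For (i), fix such a $g$ and compare it with $f_R$ on $\Pi_R$: on $\partial\Pi$ both equal $1$, while on $\Pi\setminus\Pi_R$ one has $f_R=0\le g$, so the maximum principle yields $f_R\le g$ on $\Pi_R$, hence everywhere; letting $R\to\infty$ gives $f\le g$. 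For (ii), writing $M=\sup_\Pi u$ and assuming $M>0$, the decay hypothesis confines $\{u>M/2\}$ to a finite set, so $u$ attains its supremum $M$ at some interior vertex; the strong maximum principle then propagates the value $M$ through the connected set $\overline{\Pi}$ down to $\partial\Pi$, contradicting $u|_{\partial\Pi}=0$.

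With these in hand both implications are short. If $\Pi$ is parabolic, then by Definition~\ref{def:para1} no barrier exists; since $f$ itself is nonnegative, harmonic, and equal to $1$ on $\partial\Pi$, this forces $\liminf_{x\to\Pi(\infty)}f(x)\ge 1$, and together with $f\le 1$ it gives $f\to 1$ at infinity. Then $u:=1-f$ meets the hypotheses of (ii), so $u\equiv 0$, i.e. $f\equiv 1$. Conversely, if $f\equiv 1$ but $\Pi$ were non-parabolic, a barrier $g$ with $\liminf_{x\to\Pi(\infty)}g(x)<1$ would exist; minimality (i) gives $1=f\le g$ on $\Pi$, forcing $\liminf_{x\to\Pi(\infty)}g(x)\ge 1$, a contradiction. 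Hence $\Pi$ is parabolic.

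I expect the main obstacle to be the maximum principle at infinity in step (ii): on a finite graph it is immediate, but on the infinite end one must use the $\liminf=0$ hypothesis to localize the supremum to a finite set before invoking the strong maximum principle, and then argue that the level set $\{u=M\}$ actually reaches $\partial\Pi$ through the connectedness of $\overline{\Pi}$. By contrast, the minimality step (i) and the passage to the monotone limit are routine comparison-principle arguments.
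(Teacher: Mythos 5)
Your auxiliary claim (ii) is false as stated, and this is the one genuine flaw in the proposal. Counterexample, entirely within the paper's framework: let $G=T_3$ be the $3$-regular tree with standard weights and $\rho=d$ (Example~\ref{ex:tree1}), let $\Omega=\{x_0\}$, and let $\Pi$ be an end with respect to $\Omega$, so $\partial\Pi=\{x_0\}$; write $x_1$ for the vertex of $\Pi$ adjacent to $x_0$ and $y_1,y_2$ for the two other neighbours of $x_1$. Let $u(x)$ be the probability that the random walk started at $x$ never hits $x_0$ and eventually stays in the subtree below $y_1$. Then $u\geq 0$, $u$ is harmonic on $\Pi$, $u(x_0)=0$, and $u\not\equiv 0$ (indeed $u(y_1)>0$ by transience); yet $\liminf_{x\to\Pi(\infty)}u(x)=0$, because for $x$ in the subtree below $y_2$ the walk must first pass through $x_1$, so $u(x)\leq 2^{-d(x,x_1)}$. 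Thus a nonnegative harmonic function vanishing on $\partial\Pi$ with vanishing $\liminf$ at infinity need not vanish. Correspondingly, the first step of your proof of (ii) --- ``the decay hypothesis confines $\{u>M/2\}$ to a finite set'' --- is exactly where the argument breaks: a $\liminf$ hypothesis controls $u$ only along one sequence tending to infinity and says nothing elsewhere, so the superlevel set can be infinite, as in the example. Your closing paragraph, which asserts that the $\liminf=0$ hypothesis localizes the supremum, shows this is a real misconception rather than a slip of the pen.

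The good news is that the flaw is local and the repair costs one word: state (ii) with $\limsup_{x\to\Pi(\infty)}u(x)=0$ (equivalently, $u\to 0$ at infinity) in place of $\liminf$. Under that hypothesis $\{u>M/2\}$ is contained in some ball, hence is finite by Assumption~\ref{ass:1}(i), and your strong maximum principle argument then goes through. Moreover, at the only place you invoke (ii) you already have the stronger hypothesis: from $\liminf_{x\to\Pi(\infty)}f\geq 1$ and $f\leq 1$ you correctly deduce $f\to 1$ at infinity, so $u=1-f$ satisfies $\limsup_{x\to\Pi(\infty)} u=0$, and the corrected (ii) applies verbatim. With this fix your proof is complete, and it is in substance the paper's own (sketched) argument: parabolicity forces $f\equiv 1$ by Definition~\ref{def:para1} together with the maximum principle, while in the non-parabolic case the minimality of $f$ --- your step (i) --- makes $f$ the minimal barrier, so that any barrier $g$ forces $\liminf_{x\to\Pi(\infty)}f\leq\liminf_{x\to\Pi(\infty)}g<1$ and hence $f\not\equiv 1$; your contradiction argument for the converse is the same comparison, run in the opposite direction.
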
 One can show that $G$ is non-parabolic if and only if for some (hence for all) finite $\Omega\subset V$ there exists a non-parabolic end with respect to $\Omega.$

\begin{proof}[Proof of Corollary~\ref{thm:volume}]
It is obvious that $(v)\Longrightarrow (iii)\Longrightarrow (ii).$ 

The statement $(ii)\Longrightarrow(iv)$ follows from Theorem~\ref{thm:main1}  by choosing $f\equiv 1$ and $\mu=0.$

For $(iv)\Longrightarrow(v),$ by taking $R$ as $R+3is$ in the inequality in $(iv)$, for $i=0,1,2,\cdots,$ and summing over $i,$ we obtain that, for $R\geq R_0+3s,$
\begin{eqnarray*}|\Pi|-|\Pi_R|&\leq& C \sum_{i=0}^\infty e^{-2a(R+3is-R_0)}|\Pi_{R_0}^{R_0+3s}|\\
&\leq & Ce^{-2a(R-R_0)}|\Pi_{R_0}^{R_0+3s}|.
\end{eqnarray*} This yields the result.

Now we show that $(i)\Longrightarrow(iv).$ 
Let $f_R$ be the harmonic function on $\Pi_R$ satisfying \eqref{eq:frc} and $\lim_{R\to \infty}f_{R}=f.$ Since $\Pi$ is a parabolic end, $f\equiv 1.$ Then by Lemma~\ref{lem:keyest}, for any $R\geq R_0+3s,$ $$|\Pi_R^{R+3s}|\leq C e^{-2a(R-R_0)}|\Pi_{R_0}^{R_0+3s}|.$$ This proves the statement $(iv).$

At last, we prove that $(iii)\Longrightarrow (i).$ Suppose that $\Pi$ has finite total volume. Let $f_R$ be the harmonic function on $\Pi_R$ satisfying \eqref{eq:frc} and $\lim_{R\to \infty}f_{R}=f.$ We want to show that $f\equiv 1$ on $\Pi,$ which would imply that $\Pi$ is a parabolic end. By the maximum principle, $\max_{x\in \Pi}|f(x)|\leq 1.$ Consider the function $\tilde{f}=f-1.$ Note that $\tilde{f}$ is bounded and $|\Pi|<\infty,$ $\tilde{f}\in \ell^2(\Pi,m).$
Moreover, $\tilde{f}\big|_{\partial \Pi}\equiv 0.$ By a discrete analog of Yau's $\ell^2$ Liouville theorem on graphs, see \cite{HuaKeller14,HuaJost11}, with minor modification for the Dirichlet boundary condition on $\partial\Pi,$ we prove that $\tilde{f}$ is constant on $\overline{\Pi},$ which implies that $f\equiv 1$ on $\overline{\Pi}.$ This yields that $\Pi$ is parabolic, and hence the statement $(i)$ holds.



This proves the corollary.

\end{proof}

\begin{proof}[Proof of Corollary~\ref{thm:volume2}]
Note that $(i),(ii),(iii)$ in this corollary are converse statements of $(i),(ii),(iii)$ in Corollary~\ref{thm:volume}.
 Hence they are equivalent.
 
We show that $(ii)\Longrightarrow (iv).$ By $(ii),$ for any $R\geq R_0,$
$$e^{-2a R} |\Pi_R^{R+3s}|\geq\sum_{\Pi_{R}^{R+3s}}e^{-2a r}m\geq \epsilon_0.$$ This yields the statement $(iv).$

Moreover, $(iv)\Longrightarrow (iii)$ is trivial. Hence all properties $(i)-(iv)$ are equivalent. This proves the corollary.

\end{proof}

\begin{proof}[Proof of Corollary~\ref{coro:app3}]  For any $\epsilon>0$ there exists $R_0(\epsilon)>0$ such that 
$$\mu_1(V\setminus B_{R_0}(x_0))\geq \mu_{e}(G)-\epsilon.$$ By the local finiteness, there are finitely many ends $\{\Pi_i\}_{i=1}^N$ of $G$ with respect to $B_{R_0}(x_0).$ Moreover, for any $1\leq i\leq N,$ $$\mu_1(\Pi_i)\geq \mu_1(V\setminus B_{R_0}(x_0))\geq \mu_{e}(G)-\epsilon.$$ Let $a=a_0( \mu_{e}(G)-\epsilon).$ Since $a_0(\cdot)$ is non-decreasing, for any $1\leq i\leq N,$  $a_0(\mu_1(\Pi_i))\geq a.$

First we consider the case that the total volume of $G$ is infinite.
Hence there exists an end $\Pi_j,$ for some $1\leq j\leq N,$ such that $|\Pi_j|=\infty.$ 
 By Corollary~\ref{thm:volume2},
for sufficiently large $R,$
$$|(\Pi_j)_R|\geq Ce^{2R a_0(\mu_1({\Pi_j}))}\geq Ce^{2aR},$$ where $C$ is independent of $R.$ This yields that
$$|B_R(x_0)|\geq |(\Pi_j)_R|\geq Ce^{2aR}.$$ Therefore,
$$\frac{\log |B_R(x_0)|}{R}\geq 2 a+\frac{\log C}{R}.$$
By taking the liminf for $R\to \infty,$ we get 
$$\wt{\tau}(G)=\liminf_{R\to\infty}\frac{\log |B_R(x_0)|}{R}\geq 2 a.$$  By passing to the limit $\epsilon \to 0,$ the result follows.

Next we consider the case that the total volume of $G$ is finite. Then $|\Pi_i|<\infty,$ for all $1\leq i\leq N.$
 By Corollary~\ref{thm:volume}, there exist $C_i,$ $1\leq i\leq N,$ such that 
for $R\geq R_0+3s,$
$$|\Pi_i\setminus {(\Pi_i)}_R|\leq C_ie^{-2R a_0(\mu_1({\Pi_i}))}\leq C_ie^{-2aR},\quad 1\leq i\leq N.$$ 
For sufficiently large $R$ such that $\partial B_{R}(x_0)$ has no intersection with finite connected components of $V\setminus B_{R_0}(x_0).$ Summing over $i=1,\cdots,N,$ we obtain that
$$|V\setminus B_R(x_0)| \leq Ce^{-2aR}.$$ Hence 
$$\frac{-\log (|V\setminus B_R(x_0)|)}{R}\geq 2 a-\frac{\log C}{R}.$$ The result follows from taking the liminf for $R\to\infty$ and then letting $\epsilon\to 0.$
\end{proof}

\begin{coro}\label{coro:app4}Let $G=(V,E,m,w)$ be an infinite weighted graph, and $\dist$ be an intrinsic metric satisfying Assumption~\ref{ass:1}.  Let $\Pi$ be an end satisfying $\mu_1(\Pi)>0.$ Let $u:\overline{\Pi}\to \R$ be a harmonic function on $\Pi$ satisfying $u\in \ell^q(\Pi,m)$ for $q>1.$ 
If either 
\begin{enumerate}
\item $q\geq 2,$ or
\item $q\in(1,2)$ and  $|\Pi_R|\leq C e^{\frac{2q}{2-q} aR}$ for sufficiently large $R,$ where $a=a_0(\mu_1(\Pi)),$
\end{enumerate}
then $u$ is bounded and for sufficiently large $R,$ $$\sum_{\Pi_{R}^{R+3s}}u^2m\leq Ce^{-2R a_0(\mu_1(\Pi))}.$$

\end{coro}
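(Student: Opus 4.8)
The plan is to feed a suitable power of $|u|$ into Theorem~\ref{thm:main1}. The starting observation is that for every $p\ge 1$ the function $|u|^p$ is nonnegative and subharmonic. Indeed, since $u$ is harmonic, $|u|$ is subharmonic: from $|u(y)|-|u(x)|\ge \mathrm{sgn}(u(x))\,(u(y)-u(x))$ one gets $\Delta|u|(x)\ge \mathrm{sgn}(u(x))\,\Delta u(x)=0$. Then, writing $\Phi(t)=t^p$ (convex and nondecreasing on $[0,\infty)$), the discrete convexity inequality $\Phi(|u|(y))\ge\Phi(|u|(x))+\Phi'(|u|(x))(|u|(y)-|u|(x))$ yields $\Delta(|u|^p)\ge\Phi'(|u|)\,\Delta|u|\ge 0$. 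Hence each $|u|^p$ satisfies $\Delta(|u|^p)\ge-\mu\,|u|^p$ with $\mu=0<\mu_1(\Pi)$, so Theorem~\ref{thm:main1} is applicable with $a=a_0(\mu_1(\Pi))$ (Definition~\ref{def:d1}) as soon as its hypothesis \eqref{eq:decay1} is verified.

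For the case $q\ge 2$ I would take $g:=|u|^{q/2}$. Since $q/2\ge 1$, the power is convex, so $g$ is subharmonic by the above, and moreover $\sum_{\Pi}g^2m=\sum_{\Pi}|u|^q m<\infty$, i.e. $g\in\ell^2(\Pi,m)$. Thus $\sum_{\Pi_R^{R+3s}}g^2m\to0$ as $R\to\infty$, and because $e^{-2ar}\le1$ the hypothesis \eqref{eq:decay1} holds for $g$ along \emph{every} sequence $R_i\to\infty$. Applying Theorem~\ref{thm:main1} to $g$ then gives the exponential decay of the $\ell^q$-mass,
\[
\sum_{\Pi_R^{R+3s}}|u|^q m\le Ce^{-2aR},\qquad R\ \text{large}.
\]
The key point is that this uses \emph{no} volume hypothesis: the subharmonicity of the convex power $|u|^{q/2}$ is exactly what makes $\ell^q$-integrability sufficient, which explains why the regime $q\ge2$ is unconditional.

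For $q\in(1,2)$ the map $t\mapsto t^{q/2}$ is concave, so $|u|^{q/2}$ is no longer subharmonic and the shortcut fails; this is precisely where the growth condition must compensate. Here I would instead apply Theorem~\ref{thm:main1} directly to $f:=|u|$. Verifying \eqref{eq:decay1} becomes easy \emph{once $u$ is known to be bounded}, for then $u^2\le\|u\|_\infty^{2-q}|u|^q$ gives
\[
\sum_{\Pi_R^{R+3s}}u^2e^{-2ar}m\le\|u\|_\infty^{2-q}\sum_{\Pi_R^{R+3s}}|u|^q m\longrightarrow 0,
\]
so that Theorem~\ref{thm:main1} produces $\sum_{\Pi_R^{R+3s}}u^2m\le Ce^{-2aR}$. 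Thus the whole case reduces to an a priori $L^\infty$ bound for $u$, and this is where $|\Pi_R|\le Ce^{\frac{2q}{2-q}aR}$ enters: it feeds a local sup-estimate (a Moser/De Giorgi-type iteration powered by the Faber--Krahn inequality coming from $\mu_1(\Pi)>0$), the exponent $\tfrac{2q}{2-q}a$ being the borderline volume growth for which the iteration closes.

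It remains to pass from the mass-decay to the boundedness of $u$ and to the stated $\ell^2$ form of the estimate. I would argue by the maximum principle: $u^2$ is subharmonic, and since its mass over the outer annuli tends to zero, $\sup_\Pi u^2$ should be controlled by the values on the finite inner boundary $\partial\Pi$, using the sub-mean-value inequality for $u^2$ and the finiteness of balls from Assumption~\ref{ass:1}. I expect this last step---converting integral (mass) decay into a genuine pointwise bound without assuming any lower bound on the vertex measure $m$---to be the main obstacle; in the regime $q>2$ it is also what is needed to upgrade the $\ell^q$-mass decay of the second paragraph to the $\ell^2$-mass decay asserted in the statement.
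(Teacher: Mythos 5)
Your observation that convex powers of $|u|$ are subharmonic is correct, and the resulting application of Theorem~\ref{thm:main1} to $g=|u|^{q/2}$ for $q\ge 2$ is valid: since $g\in\ell^2(\Pi,m)$ and $e^{-2ar}\le 1$, hypothesis \eqref{eq:decay1} holds along every sequence $R_i\to\infty$, and one gets $\sum_{\Pi_R^{R+3s}}|u|^qm\le Ce^{-2aR}$ unconditionally. But this proves a different statement from the corollary except when $q=2$: the corollary asserts that $u$ is \emph{bounded} and that its $\ell^2$-mass decays, and neither follows from $\ell^q$-mass decay. H\"older's inequality goes the wrong way (upgrading $\ell^q$-decay to $\ell^2$-decay for $q>2$ would require the annuli volumes to decay, not merely be controlled), and on a general weighted graph integral decay gives no pointwise information at all, because no positive lower bound on the vertex measure $m$ is assumed: $u^2(x)m_x\le Ce^{-2aR}$ is compatible with $|u(x)|\to\infty$ along a sequence where $m_x\to 0$. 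So the step you flag as ``the main obstacle'' is the actual mathematical content of the corollary, not a finishing touch. The case $q\in(1,2)$ is worse: your verification of \eqref{eq:decay1} presupposes $\|u\|_\infty<\infty$, and that boundedness is delegated to a Moser/De~Giorgi iteration which is neither carried out nor plausible at this level of generality (no degree bounds, no lower measure bounds, no volume doubling); the guess that $\frac{2q}{2-q}a$ is a borderline exponent for such an iteration is also not how the volume hypothesis actually enters.

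What is missing is a comparison-function argument, which is how the paper proceeds. Solve the Dirichlet problems \eqref{eq:frc} with boundary data $u$: let $f_R$ be harmonic on $\Pi_R$ with $f_R=u$ on $\partial\Pi$ and $f_R=0$ on $\Pi\setminus\Pi_R$. By the maximum principle $|f_R|\le\max_{\partial\Pi}|u|$, so a subsequence converges to a \emph{bounded} harmonic function $f$ on $\Pi$ with $f=u$ on $\partial\Pi$. Lemma~\ref{lem:keyest} (that is, Theorem~\ref{thm:main1} applied to the compactly supported subharmonic functions $|f_{R_i}|$, for which \eqref{eq:decay1} is trivial, followed by passing to the limit) yields $\sum_{\Pi_R^{R+3s}}f^2m\le Ce^{-2aR}$; in particular $f\in\ell^2(\Pi,m)$ and, being bounded, $f\in\ell^q(\Pi,m)$ for every $q\ge2$. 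The corollary then reduces to the identification $u\equiv f$, which is where the hypotheses on $u$ are really used: for $q\ge2$, the function $u-f\in\ell^q(\Pi,m)$ vanishes on $\partial\Pi$ and the discrete Yau $\ell^q$-Liouville theorem (adapted to the Dirichlet condition) forces $u-f\equiv0$; for $q\in(1,2)$, the volume hypothesis enters through H\"older, namely $\sum_{\Pi_R^{R+3s}}|f|^qm\le|\Pi_R^{R+3s}|^{1-q/2}\bigl(\sum_{\Pi_R^{R+3s}}f^2m\bigr)^{q/2}\le C$, whence $\sum_{\Pi_R}|u-f|^qm=O(R)$ and the quantitative Liouville theorem applies. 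Boundedness of $u$ and the stated $\ell^2$-decay are then inherited from $f$. Your direct argument bypasses exactly this identification step, and without it the conclusion cannot be reached.
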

\begin{proof} Let $R_0=\max_{x\in \partial \Pi}r(x).$ For any $R\geq R_0+s,$ let $f_{R}$ be the harmonic function on $\Pi_R$ such that $f\big|_{\partial \Pi}=u, f\big|_{\Pi\setminus\Pi_R}=0.$ By the maximum principle,
$|f_R|$ is bounded by $\max_{\partial \Pi}|u|.$ So that there exists a sequence $R_i\to \infty,$ such that
$$f_{R_i}(x)\to f(x),\quad \forall x\in \Pi\cup \partial \Pi.$$ Hence, $f$ is a bounded harmonic function on $\Pi.$ By Lemma~\ref{lem:keyest}, $R\geq R_0+3s,$
\begin{equation}\label{eq:part1}\sum_{\Pi_{R}^{R+3s}}f^2m\leq C e^{-2aR }.\end{equation} 
This yields that $f\in \ell^2(\Pi,m).$ 

For $q\geq 2,$ since $f$ is bounded, $f\in \ell^q(\Pi,m).$ Hence 
$u-f\in \ell^q(\Pi,m),$ which vanishes on $\partial \Pi.$ By a discrete analog of Yau's $\ell^q$ Liouville theorem on graphs, see \cite{HuaKeller14,HuaJost11}, with minor modification for the Dirichlet boundary condition on $\partial\Pi,$ $u-f$ is constant and hence $u-f\equiv 0.$ The result follows from \eqref{eq:part1} for $q\geq 2.$

For $q\in(1,2),$ by \eqref{eq:part1}, the Cauchy-Schwarz inequality and the volume growth bound,
\begin{eqnarray*}
\sum_{\Pi_R^{R+3s}}|f|^qm&\leq&|\Pi_{R}^{R+3s}|^{1-\frac{q}{2}}\left(\sum_{\Pi_R^{R+3s}}|f|^2m\right)^{\frac{q}{2}}\\
&\leq & Ce^{q aR} e^{-q aR}=C.
\end{eqnarray*} Since $u\in \ell^q(\Pi,m),$ $$\sum_{\Pi_R} |u-f|^qm=O(R),\quad R\to\infty.$$
By a discrete analog of quantitative Yau's $\ell^q$ Liouville theorem on graphs, see \cite{HuaKeller14,HuaJost11}, with necessary modification on the Dirichlet boundary condition, one obtains that
$u-f$ is constant and hence $u-f\equiv 0.$ The result follows from \eqref{eq:part1}.

This proves the corollary.

\end{proof}

\bigskip
\bigskip

\textbf{Acknowledgements.} We thank Dr. Lu Li for helpful discussions on the problem.  B. H. is supported by NSFC, no.11831004 and no. 11926313. Z. L. is supported by an NSF grant of  DMS-19-08513 of USA.

On behalf of all authors, the corresponding author states that there is no conflict of interest.

 Data sharing not applicable to this article as no datasets were generated or analysed during the current study.

\bibliographystyle{alpha}
\bibliography{pspec}

\end{document}